\newtheorem{theorem}{Theorem}[section]
\newtheorem{corollary}[theorem]{Corollary}
\newtheorem{proposition}[theorem]{Proposition}
\theoremstyle{definition}
\newtheorem{definition}[theorem]{Definition}
\newtheorem{example}[theorem]{Example}
\newtheorem{remark}[theorem]{Remark}
\newtheorem{assumption}[theorem]{Assumption}
\numberwithin{equation}{section}
\DeclareMathOperator{\ran}{ran}
\title[]{The interval turnpike property for adjoints}
\author[T.\ Faulwasser]{Timm Faulwasser}
\author[L.\ Gr\"une]{Lars Gr\"une}
\author[J.-P.\ Humaloja]{Jukka-Pekka Humaloja}
\author[M.\ Schaller]{Manuel Schaller}
\thanks{This work was supported by the DFG Grants GR 1569/17-1 and SCHI 1379/5-1 and was conducted while the third author was visiting University of Bayreuth under Academy of Finland Grant number 310489 held by Lassi Paunonen and supported by a travel grant from the Magnus Ehrnrooth Foundation.}
\address[T.\ Faulwasser]{TU Dortmund University, Institute of Energy Systems, Energy Efficiency and Energy Economics, Germany}
\email{{\tt timm.faulwasser@ieee.org}}
\address[L.\ Gr\"une, M.\ Schaller]{University of Bayreuth, Department of Mathematics, Germany}
\email{{\tt lars.gruene@uni-bayreuth.de,manuel.schaller@uni-bayreuth.de}}
\address[J.-P.\ Humaloja]{Tampere University, Mathematics, Computing Sciences, Finland}
\email{{\tt jukka-pekka.humaloja@tuni.fi}}
\keywords{Turnpike property, optimal control, nonlinear systems, partial differential equations}
\subjclass[2010]{93D20, 49K20, 49K40, 93B05, 93B07}
\begin{document}
	\begin{abstract}
In this work we derive an interval turnpike result for adjoints of finite- and infinite-dimensional nonlinear optimal control problems under the assumption of an interval turnpike on states and controls. We consider stabilizable dynamics governed by a generator of a semigroup with finite-dimensional unstable part satisfying a spectral decomposition condition and show the desired turnpike property under continuity assumptions on the first-order optimality conditions. We further give stronger estimates for analytic semigroups and provide a numerical example with a boundary controlled semilinear heat equation to illustrate the results.
	\end{abstract}
	\maketitle
	\section{Introduction}
	The turnpike property is a particular feature of optimal solutions of dynamic optimal control problems (OCPs). It is characterized by the phenomenon of optimal solutions to long-horizon OCPs staying close a specific steady state, the so-called turnpike, for the majority of the time. First described in a paper by von Neumann in the middle of the 20th century \cite{Neumann1945}, turnpike behavior has since received vast interest, cf.\ the recent works \cite{Gruene2018c,Gruene2019,Gugat2018,Gugat2016,Porretta2018,Sakamoto2019,Trelat18,Zaslavski2016}. For nonlinear problems, a way to derive turnpike properties is linearization of the extremal equations, analysis of the linearization and a smallness assumption, cf.\ \cite{Breiten2018,Gruene2020,Trelat2016,Trelat2015}. Another possibility is to assume a particular notion of dissipativity, cf.\ \cite{Damm2014,Faulwasser2017, Gruene2019a,Gruene2016a}, which has the advantage to allow for global turnpike properties on state and control, i.e., without a smallness condition on, e.g., the initial distance to the turnpike. In that context, however, up to now there were no results on the behavior of the corresponding adjoints. Thus, in this paper, we will show that the turnpike behavior of state and control induces turnpike behavior of the adjoints without smallness assumptions. To this end, we analyze the first-order necessary optimality conditions and, loosely speaking, show for problems governed by general evolution equations that continuity of the nonlinearities and convergence of state and control imply convergence of the adjoints. While our results are formulated in an infinite-dimensional setting, the results are new also for finite-dimensional systems, which form a special case of our setting. Besides being an important structural property of the optimal triplet, turnpike properties can be leveraged in design of numerical methods. For example \cite{Trelat2015} suggests to exploit them in indirect shooting methods, in \cite{Gruene2018c,Gruene2020} it is used for tailored discretization of infinite dimensional OCPs in a receding-horizon setting and \cite{Faulwasser2020} hinges on them in mixed-integer OCPs.
	
	After introducing the optimal control problem, the first-order conditions and the functional analytic setting in \Cref{sec:setting}, we deduce the desired turnpike property of the adjoints in \Cref{sec:stableorcontrollable} for exponentially stable and exactly controllable systems. Assuming that the underlying operator satisfies a spectrum decomposition assumption, we prove the result in \Cref{sec:finitedim} for stabilizable systems with finite-dimensional unstable part. In \Cref{sec:analytic} we provide stronger estimates assuming more structure of the problem, i.e., that the underlying semigroup is analytic and hence the dynamics are given by a parabolic evolution equation. Further in \Cref{sec:disc} we discuss and give sufficient conditions for our main assumption, i.e., interval turnpike behavior of state and control. Finally in \Cref{sec:example} we present a numerical example with a boundary controlled semilinear heat equation on a two-dimensional domain.

	\section{Setting and preliminaries}
	\label{sec:setting}
	Let $(X,\|\cdot\|_X)$ be a Banach space and $(U,\langle \cdot,\cdot\rangle_U )$ be a Hilbert space with corresponding norm $\|\cdot\|_U$. Consider the optimal control problem
	\begin{align}
	\label{DOCP}
	\begin{split}
	\min_{u\in L_2(0,T;U)} \,\, &\int_0^T J(x(t),u(t))\,dt\\
	\text{s.t.}\quad \dot{x}(t)&=\mathcal{A}x(t) + \mathcal{B}u(t)+ f(x(t),u(t))\\
	x(0)&=x_0,
	\end{split}
	\end{align}
	where $J:X\times U \to \mathbb{R}$ is sufficiently smooth, $x_0\in X$, $\mathcal{A}:D(\mathcal{A})\subset X\to X$ generates a strongly continuous semigroup on $X$, $\mathcal{B}\in L(U,X)$, and $f:X\times U\to X$ is a sufficiently smooth, locally Lipschitz nonlinearity. We will assume that the above problem has at least one optimal solution $(x,u)\in C(0,T;X)\times L_2(0,T;U)$, cf.\ \cite[Chap.\ 3]{Li1995}. Additionally, we consider $(\bar{x},\bar{u}) \in X\times U$ to be an optimal solution of the corresponding steady state system, i.e., $(\bar{x},\bar{u})$ solves
	\begin{align}
	\label{SOCP}
	\begin{split}
	\min_{u\in U}& \,\, J(x,u)\\
	\text{s.t.}\quad 0&=\mathcal{A}x+\mathcal{B}u+f(x,u),
	\end{split}
	\end{align}
Our goal in this paper is to find conditions under which interval turnpike behavior of the states and control inputs implies interval turnpike behavior of the adjoints. Our basic assumption on the behavior of the optimal solutions is thus the following.
	\begin{assumption}[Interval turnpike property for states and controls]
		\label{as:tp}
		We assume there are strictly mono\-tonous\-ly increasing functions $t_1,t_2:\mathbb{R}^{\geq 0}\to \mathbb{R}^{\geq 0}$ with $0\leq t_1(T)\leq t_2(T)\leq T $ such that $\nu(T):=t_2(T)-t_1(T)$ is strictly monotonously increasing and unbounded and for each $\varepsilon>0$ there is $T_0>0$ such that 
		\begin{align*}
		\|x(t)-\bar{x}\|_X + \|u(t)-\bar{u}\|_U\leq \varepsilon \qquad \forall t\in [t_1(T),t_2(T)],\, T\ge T_0.
		\end{align*}
	\end{assumption}
Note that this bound immediately implies $u\in L_\infty(t_1(T),t_2(T);U)$.
	\begin{remark}
		\label{rem:exp_tp}
		We say that $(x,u)\in C(0,T;X)\times L_2(0,T;U)$ satisfies the exponential turnpike property, if there is a constant $c>0$ and a decay parameter $\mu > 0$, both independent of $T$ such that we have
		\begin{align*}
		\|x(t)-\bar{x}\|_X + \|u(t)-\bar{u}\|_U \leq c\left(e^{-\mu t}+e^{-\mu(T-t)}\right).
		\end{align*}
		If this inequality holds, it can be easily seen, cf.\ \cite[Discussion after Rem.\ 6.3]{Gruene2019} that we can choose $\delta \in (0,\frac{1}{2})$ such that for each $\varepsilon>0$ there is a horizon $T$ such that
		\begin{align*}
		\|x(t)-\bar{x}\|_{L_2(\delta T,(1-\delta)T;X)} + \|u(t)-\bar{u}\|_{L_2(\delta T,(1-\delta)T;U)} \leq \varepsilon
		\end{align*}
		and
		\begin{align*}
		\|x(t)-\bar{x}\|_{C(\delta T,(1-\delta)T;X)} + \|u(t)-\bar{u}\|_{L_\infty(\delta T,(1-\delta)T;U)} \leq \varepsilon,
		\end{align*}
		i.e., $L_2$ and uniform convergence on a fixed part of the time interval $[0,T]$ for $T\to \infty$. Thus, \Cref{as:tp} is satisfied with $t_1(T)=\delta T$, $t_2(T)=(1-\delta)T$ and $\nu(T) = (1-2\delta)T$.
	\end{remark}
	The corresponding necessary optimality conditions of above problem \eqref{DOCP} read, cf.\ \cite[Chap.\ 4]{Li1995},
	\begin{align}
	\nonumber
	\dot{\lambda}(t)&=-(\mathcal{A}+f_x(x(t),u(t)))^*\lambda(t) + J_x(x(t),u(t))\\
	\label{eq:dynamics:optcond}
	0&=\left(\mathcal{B}+f_u(x(t),u(t))\right)^*\lambda(t)+J_u(x(t),u(t))\\
	\nonumber
	\dot{x}(t) &=\mathcal{A}x(t)+\mathcal{B}u(t)+f(x(t),u(t)),
\intertext{where $x(0)=x_0$ and $\lambda(T)=0$. Analogously, the optimality conditions of the steady state problem read }
	\nonumber
	0&=-(\mathcal{A}+f_x(\bar{x},\bar{u}))^*\bar{\lambda} + J_x(\bar{x},\bar{u})\\
	\nonumber
	0&=\left(\mathcal{B}+f_u(\bar{x},\bar{u})\right)^*\bar{\lambda}+J_u(\bar{x},\bar{u})\\
		\nonumber
	0&= \mathcal{A}\bar{x}+\mathcal{B}\bar{u}+f(\bar{x},\bar{u}).
	\end{align}
	Our goal in this paper is to show the interval turnpike property of the adjoint $\lambda$.
	\begin{definition}[Interval turnpike property for adjoint states]
		\label{def:adtp}
		We say that the adjoint $\lambda$ satisfies the interval turnpike property at the steady state adjoint $\bar \lambda$, if there are strictly mono\-tonous\-ly increasing functions $s_1,s_2:\mathbb{R}^{\geq 0}\to \mathbb{R}^{\geq 0}$ with $0\leq s_1(T)\leq s_2(T)\leq T $ such that $\theta(T):=s_2(T)-s_1(T)$ is strictly monotonously increasing and unbounded and for each $\varepsilon>0$ there is $T_0>0$ such that 
		\begin{align*}
		\|\lambda(t)-\bar{\lambda}\|_Y \leq \varepsilon \qquad \forall t\in [s_1(T),s_2(T)],\, T\ge T_0.
		\end{align*}%
\end{definition}

For our analysis, we define the remainder terms
	\begin{align}
\nonumber
	r_{f}(t) &:= f(x(t),u(t)) - f(\bar{x},\bar{u})
\intertext{and for $\star\in\{x,u\}$}
\nonumber
	r_{f_\star}(t) &:= f_\star(x(t),u(t)) - f_\star(\bar{x},\bar{u}),\\
	\nonumber
	r_{J_\star}(t) &:=J_\star(x(t),u(t)) - J_\star(\bar{x},\bar{u}).
\intertext{Thus, denoting $A:=\mathcal{A}+f_x(\bar x, \bar u)$, $B:=\mathcal{B}+f_u(\bar x, \bar u)$, and $(\delta x,\delta u, \delta\lambda):=(x-\bar x, u-\bar u, \lambda-\bar\lambda)$, we have that}
	\label{eq:extremal_delta}
	\dot{\delta \lambda}(t)&=-A^*\delta \lambda(t) - r_{f_x}(t)^*\delta(t) \lambda + r_{J_x}(t)\\
	\label{eq:eq_delta}
	0&=B^*\delta \lambda(t) + r_{f_u}(t)^*\delta \lambda(t) + r_{J_u}(t)\\
	\dot{\delta x}(t) &= \mathcal{A}\delta x + \mathcal{B}\delta u+r_{f}(t)\label{eq:delta_inival}
	\end{align}
	with $\delta x(0)=x_0-\bar{x}$ and $\delta \lambda(T)=-\bar{\lambda}$. 
	It is clear that the solutions $x$, $u$, and $\lambda$ of \cref{eq:dynamics:optcond} depend on $T$ and hence also $\delta x$, $\delta u$, and $\delta \lambda$ do. However, for the sake of readability, we do not explicitly indicate this dependence. We note that, using the definition of $\delta x$, $\delta u$, and $\delta \lambda$, the inequalities from Assumption \ref{as:tp} and Definition \ref{def:adtp} can be written as $\|\delta x(t)\|_X+\|\delta u(t)\|_U\le \varepsilon$ and $\|\delta\lambda(t)\|_Y\le \varepsilon$, respectively.
	\begin{remark}[Extension to box constraints on the control]
			We briefly discuss how the case of box constraints on the control
			can be treated analogously to the unconstrained case, if one assumes that the turnpike lies in the interior of the constraints. To this end, in the dynamic problem \eqref{DOCP} we add the constraints
			\begin{align}
			\label{eq:dynamicconst}
			u_a\leq {u}(t) \leq u_b
			\end{align}
			for a.e.\ $t\in [0,T]$ and in the steady state problem \eqref{SOCP} we add the constraints
			\begin{align}
			\label{eq:steadyconst}
			u_a\leq \bar{u} \leq u_b
			\end{align}
			for $u_a,u_b \in U$.
			In the case of box constraints, standard assumptions and the classical methods of calculus of variations, cf., e.g., \cite[Chap.\ 5]{Troeltzsch2010}, \cite{Hinze09} or \cite{Lions71}, ensure that, besides $\lambda$, there exist two additional multipliers $0\leq \mu_a,\mu_b\in L_2(0,T;U)$ such that the stationarity condition, i.e., the second line of \eqref{eq:dynamics:optcond} becomes
			\begin{align}
			\nonumber
			&0=\left(\mathcal{B}+f_u({x}(t),{u}(t))\right)^*{\lambda(t)} + J_u({x}(t),{u}(t)) + \mu_a(t)-\mu_b(t),\\
			\label{eq:comp}
			&0=\langle \mu_a,{u}-u_a\rangle_{L_2(0,T;U)}\qquad 0=\langle \mu_b,u_b-{u}\rangle_{L_2(0,T;U)},\\
			\nonumber
			&u_a\leq {u}(t)\leq u_b,
			\end{align}
			for a.e.\ $t\in [0,T]$.
			If we assume that the constraint is not active at an optimal solution of the steady state problem \eqref{SOCP}, i.e., the inequalities in \eqref{eq:steadyconst} are strict, then \Cref{as:tp} assures that we can choose $\varepsilon>0$ such that also the solution of the dynamic OCP does not touch the constraints on $[t_1(T),t_2(T)]$, i.e., \eqref{eq:dynamicconst} is also strict on this interval. In that case, the complementarity condition \eqref{eq:comp} assures that $\mu_a(t)=\mu_b(t)=0$ for a.e.\ $t\in [t_1(T),t_2(T)]$ and the dynamics reduce to the unconstrained case \eqref{eq:dynamics:optcond} on the subinterval $[t_1(T),t_2(T)]$.
	\end{remark}
In our subsequent analysis, we will exploit that, due to \Cref{as:tp} and continuity, the remainder terms $r_f$, $r_{f_\star}$ and $r_{J_\star}$ defined above approach zero for $t\in[t_1(T),t_2(T)]$. In order to make this property rigorous in the appropriate function spaces, we introduce superposition operators. Intuitively, a superposition operator is a nonlinear map between function spaces induced by a given nonlinear function defined on, e.g., finite-dimensional spaces by superposition. We refer the interested reader to \cite[Sec.\ 4.3.3]{Troeltzsch2010} for a short introduction and \cite{Appell1990,Goldberg1992} for an in-depth treatment of these topics in Sobolev and Lebesgue spaces of abstract functions. In order to not hide the main steps behind technical details, we only consider the case of scalar nonlinearities here.
	\begin{definition}[Superposition operator]
		\label{defn:nonlin:superpos}
		Let $\Omega\subset \mathbb{R}^n$, $n\in \mathbb{N}$. Consider a mapping $\varphi:\mathbb{R}\to \mathbb{R}$. Then the mapping $\Phi$ defined by
		\begin{align*}
		\Phi(x)(\omega) = \varphi(x(\omega)) \qquad \text{for } \omega\in \Omega
		\end{align*}
		assigns to a function $x:\Omega\to \mathbb{R}$ a new function $z:S\to \mathbb{R}$ via the relation $z(\omega) = \varphi(x(\omega))$ for $\omega\in \Omega$ and is called a Nemytskij operator or superposition operator.
	\end{definition} 
	An immediate question that arises is the following: Given a function $x\in L_p(\Omega)$, which integrability does the image $\underline\Phi(x)$ have? It turns out that in case $p<\infty$, this is coupled to growth assumptions on the underlying nonlinearity. It is to be expected as, e.g., for $\varphi(x)=x^3$, the corresponding superposition operator maps $L_{3p}(\Omega)$ to $L_{p}(\Omega)$.
	\begin{proposition}
		\label{prop:nonlin:mapsto}
		Let $\varphi:\mathbb{R}\to \mathbb{R}$ be continuous. For $1\leq p,q <\infty$ let 
		\begin{align}
		\label{eq:nonlin:growth}
		|\varphi(s)| \leq c_1+c_2|s|^{\frac{p}{q}} \qquad \forall\,s\in \mathbb{R}
		\end{align}
		for constants $c_1\in \mathbb{R}$ and $c_2 \geq 0$. Then the corresponding superposition operator $\Phi$ maps $L_p(\Omega)$ into $L_q(\Omega)$. Additionally, it is continuous as a nonlinear map from $L_p(\Omega)$ to $L_q(\Omega)$, i.e., if $\|x-z\|_{L_p(\Omega)} \to 0$, we have that
		\begin{align*}
		\|\Phi(x)-\Phi(z)\|_{L_q(\Omega)} \to 0.
		\end{align*}
	\end{proposition}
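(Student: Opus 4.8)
The plan is to prove the two assertions in turn. Throughout I will assume $\Omega$ has finite Lebesgue measure, as is the case for the bounded domains relevant to this paper; without this the constant term $c_1$ would already obstruct the mapping property. I first record that the growth bound \eqref{eq:nonlin:growth} forces $c_1\ge 0$, since evaluating at $s=0$ gives $|\varphi(0)|\le c_1$.

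For the mapping property, let $x\in L_p(\Omega)$. Using \eqref{eq:nonlin:growth} together with the elementary convexity estimate $(a+b)^q\le 2^{q-1}(a^q+b^q)$, valid for $a,b\ge 0$ and $q\ge 1$, I would bound
\[
\int_\Omega |\varphi(x(\omega))|^q\,d\omega \le 2^{q-1}\!\int_\Omega\bigl(c_1^q + c_2^q|x(\omega)|^{p}\bigr)\,d\omega = 2^{q-1}\bigl(c_1^q|\Omega| + c_2^q\|x\|_{L_p(\Omega)}^p\bigr)<\infty,
\]
so that $\Phi(x)\in L_q(\Omega)$.

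The continuity claim is the substantive part. Assume $\|x_n-x\|_{L_p(\Omega)}\to 0$. Since $\varphi$ is merely continuous, there is no pointwise Lipschitz control of $\varphi(x_n)-\varphi(x)$ by $x_n-x$, so I would argue by contradiction: were the conclusion to fail, there would be $\varepsilon>0$ and a subsequence with $\|\Phi(x_{n_k})-\Phi(x)\|_{L_q(\Omega)}\ge\varepsilon$ for all $k$. The decisive step is to invoke the standard refinement of the Riesz--Fischer theorem: from any $L_p$-convergent sequence one may extract a further subsequence (not relabeled) converging to $x$ almost everywhere and dominated by a fixed function $g\in L_p(\Omega)$. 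Concretely one selects the indices so that $\|x_{n_{k+1}}-x_{n_k}\|_{L_p(\Omega)}\le 2^{-k}$ and sets $g=|x_{n_1}|+\sum_k|x_{n_{k+1}}-x_{n_k}|$, whose $L_p$-norm is finite by the triangle inequality. Along this subsequence continuity of $\varphi$ yields $\varphi(x_{n_k}(\omega))\to\varphi(x(\omega))$ for almost every $\omega$, while \eqref{eq:nonlin:growth} together with $|x_{n_k}|\le g$ and, in the limit, $|x|\le g$ gives the integrable majorant
\[
|\varphi(x_{n_k})-\varphi(x)|^q \le 2^{2q-1}\bigl(c_1^q + c_2^q g^{p}\bigr)\in L_1(\Omega).
\]
The dominated convergence theorem then yields $\|\Phi(x_{n_k})-\Phi(x)\|_{L_q(\Omega)}\to 0$, contradicting the choice of the subsequence and completing the argument.

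The main obstacle is precisely this continuity statement, and within it the passage from $L_p$-convergence to a pointwise-convergent, $L_p$-dominated subsequence. The difficulty is twofold: $L_p$-convergence does not by itself imply convergence almost everywhere, and the absence of any modulus of continuity for $\varphi$ rules out a direct norm estimate, forcing one through pointwise convergence plus a majorant. The subsequence-and-contradiction device is what reconciles the global $L_q$ claim with the fact that almost-everywhere convergence is available only after passing to a subsequence.
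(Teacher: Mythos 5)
Your proof is correct, but it takes a genuinely different route from the paper: the paper gives no argument of its own and simply cites \cite[Thm.\ 1]{Goldberg1992} for the mapping property and \cite[Thm.\ 4]{Goldberg1992} for continuity. Yours is the classical self-contained Krasnosel'skii-style proof: the convexity bound $(a+b)^q\le 2^{q-1}(a^q+b^q)$ yields the mapping property, and continuity follows by contradiction after extracting, from the $L_p$-convergent sequence, a subsequence converging almost everywhere and dominated by a fixed majorant $g\in L_p(\Omega)$, so that dominated convergence applies to $|\varphi(x_{n_k})-\varphi(x)|^q\le 2^{2q-1}\bigl(c_1^q+c_2^q g^p\bigr)$; both steps, including the preliminary observation $c_1\ge 0$, are executed correctly. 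The comparison cuts both ways. Your proof makes explicit a hypothesis the paper leaves implicit: with $c_1>0$ the conclusion genuinely requires $|\Omega|<\infty$ (the constant function $\varphi\equiv c_1$ satisfies \eqref{eq:nonlin:growth} yet maps every $x$ to a constant lying outside $L_q(\Omega)$ when the measure is infinite); this is harmless for the paper, whose domains are bounded, but your argument is more honest on this point. Conversely, the citation buys generality your argument does not have: the theorems of \cite{Goldberg1992} treat Nemytskij operators on $L_p$-spaces of abstract vector-valued functions, which is the setting the paper actually needs in \Cref{as:cont}, where superposition operators act between $X$, $U$, $Y$ and operator spaces; your scalar argument matches exactly the proposition as stated (and \Cref{defn:nonlin:superpos}), but extending it to that setting would require additional measurability and separability care. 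Finally, a stylistic note: the contradiction scaffolding can be removed by invoking the subsequence principle directly --- every subsequence of $\Phi(x_n)$ has a further subsequence converging to $\Phi(x)$ in $L_q(\Omega)$, hence the whole sequence converges --- but this is equivalent to what you wrote.
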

	\begin{proof}
		See \cite[Thm.\ 1]{Goldberg1992} and \cite[Thm.\ 4]{Goldberg1992}.
	\end{proof}
	It can also be shown that the assumptions of \Cref{prop:nonlin:mapsto} are not only sufficient for continuity, but also necessary, cf.\ \cite[Thm.\ 3.1]{Goldberg1992}. Thus, e.g., for a cubic nonlinearity $f(x,u)=-x^3$ and assuming that the state and control approach the turnpike in some $L_p$-norm, the remainder term $r_f$ will vanish in the $L_{p/3}$-norm. 
	We now formulate the main assumption considering the continuity of the remainder terms.
	\begin{assumption}
		\label{as:cont}
		We assume that there is a real Hilbert space $(Y,\langle \cdot,\rangle_Y)$ with corresponding norm $\|\cdot\|_Y$ such that the superposition operators induced by the remainder terms $r_{f_x}(t)$ and $r_{J_x}(t)$ for any $t\in [0,T]$ are continuous from $X$ to $L(X,Y)$ and $X$ to $Y$ respectively. Additionally, we assume that the superposition operators corresponding to the remainder terms $r_{f_u}(t)$ and $r_{J_u}(t)$ for any $t\in [0,T]$ are continuous from $U$ to $L(U,Y)$ and $U$ to $U^*\simeq U$ respectively.
	\end{assumption}
	\begin{remark}
		In the finite-dimensional setting with $X=Y=\mathbb{R}^n$ and $U=\mathbb{R}^m$, $n,m\in \mathbb{N}$, the superposition operator concept is not needed and the subsequent results will hold for all Lipschitz nonlinearities.
		In the infinite-dimensional setting the assumption on continuity of the superposition operators corresponding to $r_{f_u}(t)$ and $r_{J_u}(t)$ allows, e.g., for $Y=L_2(\Omega)$ if the cost functional is quadratic in the control and the dynamics include a polynomial nonlinearity in the control, if $U$ is embedded in a regular $L_p$ space with large p. The continuity of the superposition operators corresponding to $r_{f_x}(t)$ and $r_{f_u}(t)$ can be verified if the state space $X$ is sufficiently regular and embedded into an $L_p$-space with $p$ large and the nonlinearity is, e.g., polynomial in $x$. Additionally if the superposition operator corresponding to $f_{x}(\bar{x},\bar{u})^*$ can be extended to a compact operator from the domain of $\mathcal{A}^*$ in $Y$ to $Y$ and if the semigroup generated by $\mathcal{A}^*$ is exponentially stable, the perturbed operator $A^*=\left(\mathcal{A}+f_x(\bar{x},\bar{u})\right)^*$ generates a semigroup on $Y$, cf.\ \cite[Sec.\ III.2]{Engel2000} and \Cref{sec:example}.
	\end{remark}
	We assume that $A^*=\left(\mathcal{A}+f_x(\bar{x},\bar{u})\right)^*$ generates a strongly continuous semigroup $(\mathcal{T}^*(t))_{t\geq 0}$ on $Y$, $B\in L(U,Y)$ and that $\bar{\lambda}\in Y$ and whenever we refer to a solution of \eqref{eq:extremal_delta}, we mean it in the mild sense, i.e., for the adjoint, we have the variation of constants formula, cf. \cite[Sec.\ 4.2]{Pazy83}, 
	\begin{align}
	\label{eq:mildsol}
	\delta \lambda (t) = \mathcal{T}^*(T-t)\delta \lambda(T) + \int_t^{T} \mathcal{T}^*(s-t)\left(r_{f_x}(s)^*\delta \lambda(s)+r_{J_x}(s)\right)\,ds.
	\end{align}
	\section{Stable or exactly controllable systems}
	\label{sec:stableorcontrollable}
	We first analyze two particular cases, to which we will reduce more general systems in \Cref{sec:finitedim}: On the one hand the case where $A^*$ generates an exponentially stable semigroup on $Y$ and on the other hand the case of $(A,B)$ being exactly controllable.
	\begin{theorem}[Adjoint turnpike for exponentially stable $A^*$]\label{thm:expstab}
		Let \Cref{as:cont} hold. Let $(x,u)$ satisfy the interval turnpike property of \Cref{as:tp} with the intervals $[t_1(T),t_2(T)]$ and assume that the adjoints satisfy $\rho:=\sup_{T\ge 0}\|\delta \lambda\|_{C(t_1(T),t_2(T);X)}<\infty$. Assume that $A^*$ generates an exponentially stable semigroup $(\mathcal{T}^*(t))_{t\geq 0}$ on $Y$. Then $\lambda$ satisfies the interval turnpike property from \Cref{def:adtp}.
	\end{theorem}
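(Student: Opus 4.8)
The plan is to combine the exponential stability of $(\mathcal{T}^*(t))_{t\ge 0}$ with the variation of constants formula \eqref{eq:mildsol}, but to apply the latter only on a subinterval ending at $t_2(T)$ rather than at the horizon $T$. Fix $M\ge 1$ and $\omega>0$ with $\|\mathcal{T}^*(t)v\|_Y\le Me^{-\omega t}\|v\|_Y$ for all $v\in Y$, $t\ge 0$. Since \eqref{eq:mildsol} merely expresses the mild solution, it remains valid with $T$ replaced by any $\tau\in[t,T]$, so that
\begin{align*}
\delta\lambda(t)=\mathcal{T}^*(t_2(T)-t)\delta\lambda(t_2(T))+\int_t^{t_2(T)}\mathcal{T}^*(s-t)\bigl(r_{f_x}(s)^*\delta\lambda(s)+r_{J_x}(s)\bigr)\,ds
\end{align*}
for all $t\le t_2(T)$. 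This choice is the heart of the argument: it confines the integral to the turnpike interval $[t_1(T),t_2(T)]$, where both the remainder terms and the adjoint are controlled, and replaces the unknown terminal datum $\delta\lambda(T)=-\bar\lambda$ by $\delta\lambda(t_2(T))$, which is bounded by $\rho$; in particular, no information on the dynamics in the uncontrolled tail $[t_2(T),T]$ is required.

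I would then set $s_1(T):=t_1(T)$ and $s_2(T):=\tfrac12\bigl(t_1(T)+t_2(T)\bigr)$, so that $s_1,s_2$ are strictly increasing with $0\le s_1\le s_2\le T$, and $\theta(T)=s_2(T)-s_1(T)=\tfrac12\nu(T)$ is strictly increasing and unbounded, while the right-hand gap satisfies $t_2(T)-s_2(T)=\tfrac12\nu(T)\to\infty$. For $t\in[s_1(T),s_2(T)]$ the first term is estimated by exponential decay,
\begin{align*}
\|\mathcal{T}^*(t_2(T)-t)\delta\lambda(t_2(T))\|_Y\le M\rho\,e^{-\omega(t_2(T)-s_2(T))}=M\rho\,e^{-\omega\nu(T)/2}\xrightarrow{T\to\infty}0.
\end{align*}
For the integral, \Cref{as:tp} gives, for prescribed $\varepsilon>0$ and $T$ large, $\|x(s)-\bar x\|_X+\|u(s)-\bar u\|_U\le\varepsilon$ on $[t_1(T),t_2(T)]$, whence continuity of the superposition operators at $(\bar x,\bar u)$ from \Cref{as:cont} yields a modulus $\eta(\varepsilon)\to0$ with $\|r_{f_x}(s)^*\|\le\eta(\varepsilon)$ and $\|r_{J_x}(s)\|_Y\le\eta(\varepsilon)$ uniformly in $s\in[t_1(T),t_2(T)]$. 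Together with $\|\delta\lambda(s)\|_Y\le\rho$ and $\int_0^\infty Me^{-\omega\sigma}\,d\sigma=M/\omega$ this bounds the integral by $M\eta(\varepsilon)(1+\rho)/\omega$.

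To finish, given $\varepsilon'>0$ I would first choose $\varepsilon>0$ so small that $M\eta(\varepsilon)(1+\rho)/\omega<\varepsilon'/2$, obtain the corresponding horizon from \Cref{as:tp}, and then enlarge it so that $M\rho\,e^{-\omega\nu(T)/2}<\varepsilon'/2$ as well, which is possible because $\nu$ is unbounded; adding the two bounds yields $\|\delta\lambda(t)\|_Y\le\varepsilon'$ on $[s_1(T),s_2(T)]$, i.e.\ \Cref{def:adtp}. The step I expect to require the most care is the passage just described for the remainder terms, namely extracting a single modulus $\eta(\varepsilon)$ that is uniform in $s$: this hinges on coupling the merely pointwise continuity assumed in \Cref{as:cont} with the uniform-in-$s$ closeness of $(x(s),u(s))$ to the fixed reference point $(\bar x,\bar u)$ supplied by \Cref{as:tp}. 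The remaining difficulty is conceptual rather than computational, and is already resolved by restarting the variation of constants formula at $t_2(T)$ instead of at $T$, which is what renders the uncontrolled tail harmless.
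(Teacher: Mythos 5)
Your proof is correct and takes essentially the same route as the paper's: both restart the variation of constants formula at $t_2(T)$ instead of $T$, use exponential stability together with the uniform bound $\rho$ to make the terminal term vanish, use Assumptions \ref{as:tp} and \ref{as:cont} to make the integral term small, and choose $s_1(T)=t_1(T)$ with $s_2(T)$ at the midpoint of the turnpike interval. If anything, your explicit bound $M\rho\,e^{-\omega\nu(T)/2}\to 0$ for the terminal term is stated more carefully than the paper's corresponding absorption-style step, which quietly drops the factor $\tfrac12\|\delta\lambda(t_2(T))\|_Y$.
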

	\begin{proof}
		First, we write the adjoint equation, i.e., the first equation of \eqref{eq:extremal_delta}, by means of the variation of constants formula \eqref{eq:mildsol} for $t\in [t_1(T),t_2(T)]$ on $[t,t_2(T)]$
		\begin{align*}
		\delta \lambda (t) = \mathcal{T}^*(t_2(T)-t)\delta \lambda(t_2(T)) \!+ \!\int_t^{t_2(T)} \mathcal{T}^*(s-t)\left(r_{f_x}(s)^*\delta \lambda(s)\!+\!r_{J_x}(s)\right)\,ds.
		\end{align*}
		By exponential stability of the semigroup there is $M\geq 1$ and $\mu > 0$ such that $\|\mathcal{T}^*(t)\|_{L({Y},{Y})} \leq Me^{-\mu t}$ for all $t\geq 0$. This implies the existence of $c>0$ such that the estimate
		\begin{eqnarray*}
			\|\delta \lambda(t)\|_{Y} & \leq & Me^{-\mu (t_2(T)-t)}\|\delta \lambda(t_2(T))\|_{Y}\\
			&& + \;
			c\left(\|r_{f_x}\|_{C(t,t_2(T);L(X,Y))}\rho + \|r_{J_x}\|_{C(t,t_2(T);Y)}\right)
		\end{eqnarray*}
		holds for all $t\in [t_1(T),t_2(T)]$. 
		Setting $s_1(T) := t_1(T)$, $s_2(T) = (t_2(T)-t_1(T))/2$, and recalling that $t_2(T)-t_1(T)\to\infty$ as $T\to\infty$, we obtain for sufficiently large $T$ that $Me^{-\mu (t_2(T)-t)}\le 1/2$ for all $t\in[s_1(T),s_2(T)]$. This implies
		\[ \|\delta \lambda(t)\|_{Y}  \le 2c\left(\|r_{f_x}\|_{C(t,t_2(T);L(X,Y)}\rho + \|r_{J_x}\|_{C(t,t_2(T);Y)}\right)\]
		for all $t\in[s_1(T),s_2(T)]$. The assertion follows since $\|r_{f_x}\|_{C(t,t_2(T);L(X,Y)}\to 0$ and $\|r_{J_x}\|_{C(t,t_2(T);Y)}\to 0$ as $T\to\infty$ due to Assumptions \ref{as:tp} and \ref{as:cont}.
	\end{proof}
	
	\begin{remark}\label{rem:addterm} If we add a term $\sigma_T(t)$ with $\|\sigma_T\|_{C(t,t_2(T);Y)} <\infty$ on the right hand side of \eqref{eq:extremal_delta}, then a straightforward extension of the proof shows that for all sufficiently large $T$ we obtain
		\[ \|\delta \lambda(t)\|_{Y} \le \varepsilon + 2c \|\sigma_T\|_{C(t,t_2(T);{Y})} \]
		for all $t\in[s_1(T),s_2(T)]$.
	\end{remark}
	
	Next, we discuss the case of $(A,B)$ being exactly controllable.
	\begin{definition}[{Exact and approximate controllability, \cite[Def.\ 4.1.3]{Curtain1995}}]
		For any $\tau \in [0,T]$, we define the controllability map $\phi_\tau : L_2(0,\tau;U) \to {Y}$ by
		\begin{align*}
		\phi_\tau u := \int_0^\tau \mathcal{T}(\tau-s)Bu(s)\,ds.
		\end{align*}
		We call $(A,B)$ exactly controllable in time $t_c>0
		$ if $\ran \phi_{t_c}={Y}$. Similarly, we call $(A,B)$ approximately controllable in time $t_c$ if $\overline{\ran \phi_{t_c}} = {Y}$. 
		\label{def:ctrl}
	\end{definition}

	It is clear that exact and approximate controllability coincide in finite-dimensions. An important characterization of controllability is the following observability inequality, which was proven first in the seminal paper \cite{Lions1988a} with the Hilbert Uniqueness Method.
	\begin{theorem}[{\cite[Thm.\ 4.1.7]{Curtain1995}}]
		$(A,B)$ is exactly controllable in time $t_c>0$ if and only if there is $\alpha_{t_c} > 0$ such that
		\begin{align*}
		\int_0^{t_c}\|B^*\mathcal{T}^*(s)x_0\|^2_U\,ds \geq \alpha_{t_c} \|x_0\|_{Y}^2 \quad \forall\, x_0\in {Y}.
		\end{align*}
	\end{theorem}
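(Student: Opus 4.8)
The plan is to rephrase exact controllability as surjectivity of the bounded linear controllability map $\phi_{t_c}$ and to recognize the observability integral as the squared norm of its Hilbert adjoint, so that the equivalence becomes an instance of the standard duality between surjectivity of an operator and its adjoint being bounded below.

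First I would compute the Hilbert adjoint $\phi_{t_c}^*\colon Y\to L_2(0,t_c;U)$. Pairing $\phi_{t_c}u$ with $y\in Y$ and moving $\mathcal{T}(t_c-\cdot)$ and $B$ across the inner product (the interchange of the $Y$-inner product with the Bochner integral being justified by strong continuity of the semigroup on the compact interval $[0,t_c]$ together with $B\in L(U,Y)$, which also yields well-definedness and boundedness of $\phi_{t_c}$) gives $(\phi_{t_c}^*y)(s)=B^*\mathcal{T}^*(t_c-s)y$. A change of variables $\sigma=t_c-s$ then shows that $\|\phi_{t_c}^*y\|_{L_2(0,t_c;U)}^2=\int_0^{t_c}\|B^*\mathcal{T}^*(\sigma)y\|_U^2\,d\sigma$. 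Hence the observability inequality in the statement is exactly the assertion that $\phi_{t_c}^*$ is bounded below, i.e.\ $\|\phi_{t_c}^*y\|_{L_2(0,t_c;U)}^2\ge\alpha_{t_c}\|y\|_Y^2$ for all $y\in Y$.

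With this identification the theorem reduces to the functional-analytic fact that a bounded operator $T$ between Hilbert spaces is surjective if and only if $T^*$ is bounded below, applied to $T=\phi_{t_c}$. For the direction ``bounded below $\Rightarrow$ surjective'', the lower bound forces $\ker\phi_{t_c}^*=\{0\}$ and $\ran\phi_{t_c}^*$ closed; by the closed range theorem $\ran\phi_{t_c}$ is then closed, so $\ran\phi_{t_c}=\overline{\ran\phi_{t_c}}=(\ker\phi_{t_c}^*)^\perp=Y$, which is exact controllability. Conversely, if $\phi_{t_c}$ is surjective then $\ran\phi_{t_c}=Y$ is closed, whence $\ker\phi_{t_c}^*=(\ran\phi_{t_c})^\perp=\{0\}$ and $\ran\phi_{t_c}^*$ is closed as well; an injective operator with closed range is bounded below by the bounded inverse theorem applied to $\phi_{t_c}^*\colon Y\to\ran\phi_{t_c}^*$, which is precisely the observability inequality.

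The argument is essentially soft functional analysis, so I do not anticipate a serious obstacle; the only points requiring genuine care are the computation of $\phi_{t_c}^*$ and the verification that $\phi_{t_c}$ is a well-defined bounded operator for which the interchange of integral and inner product is valid in the Bochner sense. Both rest on the uniform bound $\sup_{s\in[0,t_c]}\|\mathcal{T}(s)\|_{L(Y,Y)}<\infty$ on the compact interval $[0,t_c]$ and on $B\in L(U,Y)$, which are available in the present setting.
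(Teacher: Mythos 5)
Your proof is correct: identifying the observability integral as $\|\phi_{t_c}^*y\|_{L_2(0,t_c;U)}^2$ via the adjoint formula $(\phi_{t_c}^*y)(s)=B^*\mathcal{T}^*(t_c-s)y$ and then invoking the Hilbert-space duality ``surjective if and only if the adjoint is bounded below'' through the closed range theorem is precisely the standard argument for this equivalence. The paper itself offers no proof---the statement is quoted from \cite[Thm.\ 4.1.7]{Curtain1995}, where it is deduced from a Douglas-type range-inclusion lemma ($\ran S\subseteq\ran T$ iff $SS^*\le\gamma^2TT^*$) applied to $\ran I\subseteq\ran\phi_{t_c}$, which is interchangeable with your closed-range argument; the only detail you gloss over is that $\mathcal{T}^*$ is again a strongly continuous semigroup because $Y$ is a Hilbert space, which is what makes $s\mapsto B^*\mathcal{T}^*(t_c-s)y$ a genuine element of $L_2(0,t_c;U)$.
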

	Using substitution in the previous estimate we immediately obtain that
	\begin{align}
	\label{eq:obs_bounded}
	\int_{t-t_c}^{t}\|B^*\mathcal{T}^*(t-s)\delta \lambda(t)\|^2_U\,ds \geq \alpha_{t_c} \|\delta \lambda(t)\|_{Y}^2 \quad \forall\,\delta \lambda(t)\in {Y},\quad t\in [t_c,T]
	\end{align} 
	
This enables us to derive the following bound on $\|\delta \lambda(t)\|_Y$.
	
	\begin{theorem}
		\label{thm:excont}
		Let $(A,B)$ be exactly controllable in time $t_c>0$. Then there is $c>0$ independent of $T$, such that 
		\begin{align*}
		\|\delta \lambda(t)\|_{Y}^2 \leq c\!\int_{t-t_c}^t\|r_{f_u}(s)^*\delta \lambda(s) + r_{J_u}(s)\|^2_U \!+\! \|\!-r_{f_x}(s)^*\delta \lambda(s)+r_{J_x}(s)\|_{Y}^2\,ds.
		\end{align*}
	\end{theorem}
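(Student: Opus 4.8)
The plan is to use controllability only through the observability inequality \eqref{eq:obs_bounded}, applied to the single vector $\delta\lambda(t)\in Y$, and to convert the quantity $B^*\mathcal{T}^*(t-s)\delta\lambda(t)$ appearing under the integral into the remainder terms on the right-hand side of the claim. The device for this is to transport $\delta\lambda(t)$ backward in time through the mild adjoint equation and then to eliminate the resulting boundary term via the stationarity condition \eqref{eq:eq_delta}. Throughout I work for $t\in[t_c,T]$, which is exactly the range in which \eqref{eq:obs_bounded} is valid, so that $[t-t_c,t]\subset[0,T]$.

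First I would apply the variation-of-constants formula \eqref{eq:mildsol} on the subinterval $[s,t]$, with $t$ playing the role of the terminal time, to obtain for $s\le t$
\[
\delta\lambda(s) = \mathcal{T}^*(t-s)\delta\lambda(t) - \int_s^t \mathcal{T}^*(\sigma-s)\left(-r_{f_x}(\sigma)^*\delta\lambda(\sigma)+r_{J_x}(\sigma)\right)d\sigma,
\]
and solve for the transported term:
\[
\mathcal{T}^*(t-s)\delta\lambda(t) = \delta\lambda(s) + \int_s^t \mathcal{T}^*(\sigma-s)\left(-r_{f_x}(\sigma)^*\delta\lambda(\sigma)+r_{J_x}(\sigma)\right)d\sigma.
\]
Applying $B^*$ and inserting the stationarity condition \eqref{eq:eq_delta} in the form $B^*\delta\lambda(s)=-\left(r_{f_u}(s)^*\delta\lambda(s)+r_{J_u}(s)\right)$ yields the crucial identity
\[
B^*\mathcal{T}^*(t-s)\delta\lambda(t) = -\left(r_{f_u}(s)^*\delta\lambda(s)+r_{J_u}(s)\right) + \int_s^t B^*\mathcal{T}^*(\sigma-s)\left(-r_{f_x}(\sigma)^*\delta\lambda(\sigma)+r_{J_x}(\sigma)\right)d\sigma.
\]
This is the heart of the argument: exact controllability lets us see $\delta\lambda(t)$ through $B^*$, while stationarity converts the only surviving boundary contribution $B^*\delta\lambda(s)$ entirely into remainder terms.

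It then remains to estimate. On the compact interval $[0,t_c]$ the strongly continuous semigroup is uniformly bounded, say $\|\mathcal{T}^*(\tau)\|_{L(Y,Y)}\le M$ for $\tau\in[0,t_c]$, and $B\in L(U,Y)$; hence the integral term is bounded by $\|B\|\,M\int_{t-t_c}^t\|-r_{f_x}(\sigma)^*\delta\lambda(\sigma)+r_{J_x}(\sigma)\|_Y\,d\sigma$, and Cauchy--Schwarz over an interval of length at most $t_c$ turns this into the $L_2$-in-time quantity in the claim. Using $(a+b)^2\le 2a^2+2b^2$, integrating the squared identity in $s$ over $[t-t_c,t]$, and dividing by $\alpha_{t_c}$ from \eqref{eq:obs_bounded} produces the asserted estimate with constant $c=\max\{2/\alpha_{t_c},\,2\|B\|^2M^2t_c^2/\alpha_{t_c}\}$.

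The estimate that needs the most care is precisely the $T$-independence of $c$: one must check that $\alpha_{t_c}$, $t_c$, $\|B\|$, and the bound $M$ of $(\mathcal{T}^*(\tau))$ on $[0,t_c]$ are all fixed data of the problem and do not grow with the horizon. The algebraic identity and the integral estimates are routine once the key elimination step is in place; the genuine conceptual obstacle is recognizing that the right quantity to feed into the observability inequality is the backward-transported adjoint $\mathcal{T}^*(t-s)\delta\lambda(t)$, whose $B^*$-image can be rewritten through stationarity. I also note that the sign appearing inside the norms is immaterial, so the minor sign bookkeeping between \eqref{eq:mildsol} and the stated remainder convention does not affect the result.
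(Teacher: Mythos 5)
Your proof is correct and follows essentially the same route as the paper: the paper decomposes $\delta\lambda = \delta\lambda_1 + \delta\lambda_2$ with $\delta\lambda_1(s) = \mathcal{T}^*(t-s)\delta\lambda(t)$, applies the observability inequality \eqref{eq:obs_bounded} to $\delta\lambda_1$, converts $B^*\delta\lambda(s)$ into the $r_{f_u}$, $r_{J_u}$ terms via the stationarity condition \eqref{eq:eq_delta}, and bounds the remaining Duhamel term just as you do, so your ``crucial identity'' is exactly this decomposition written inline. If anything, your formulation is marginally cleaner, since it keeps the full $\delta\lambda$ in the $r_{f_x}$-forcing throughout (consistent with the theorem statement), whereas the paper's intermediate estimate is written with $\delta\lambda_2$ inside that term.
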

	\begin{proof}
		The proof of this estimate is inspired by \cite[Proof of Rem.\ 2.1]{Porretta2013}, where the finite-dimensional case is considered. We decompose $\delta \lambda = \delta \lambda_1 + \delta \lambda_2$, where for any $s<t$ 
		\begin{align*}
		&&\delta \lambda_1'(s) &= -A^*\delta\lambda_1(s), && \delta\lambda_1(t)=\delta \lambda(t),&&\\
		&&\delta\lambda_2'(s) &= -A^*\delta\lambda_2(s)-r_{f_x}(s)^*\delta\lambda_2(s) + r_{J_x}(s), && \delta\lambda_2(t)=0,&&
		\end{align*}
		and apply the observability estimate \eqref{eq:obs_bounded} to $\delta \lambda_1(s) = \mathcal{T}^*(t-s)\delta \lambda(t)$. This yields \begin{align*}
		\alpha_{t_c}\|\delta\lambda(t)\|_{Y}^2 \leq \int_{t-t_c}^t \|B^*\delta\lambda_1(s)\|_U^2 \,ds \leq \int_{t-t_c}^t \|B^*\delta \lambda(s)\|_U^2 +\|B^*\delta \lambda_2(s)\|_U^2\,ds.
		\end{align*}
		Further, we estimate
		\begin{align*}
		\int_{t-t_c}^t \|B^*\delta \lambda_2(s)\|_U^2\,ds &\leq \int_{t-t_c}^t\|B^*\int_s^t \mathcal{T}^*(\tau-s)(-r_{f_x}(\tau)^*\delta\lambda_2(\tau)\\
		& \hspace*{5cm}  + r_{J_x}(\tau))\,d\tau\|^2_U\,ds\\ &\leq c(t_c) \int_{t-t_c}^t\|-r_{f_x}(s)^*\delta\lambda_2(s) + r_{J_x}(s)\|_{Y}^2\,ds.
		\end{align*}
		Finally, by \eqref{eq:eq_delta}, we have that 
		\begin{align*}
		\int_{t-t_c}^t\|B^*\delta\lambda(s)\|_U^2 = \int_{t-t_c}^t\|r_{f_u}(s)^*\delta \lambda(s) + r_{J_u}(s)\|^2_U\,ds,
		\end{align*}
		which concludes the proof.
	\end{proof}
	
Since the right hand side of the inequality	from Theorem \ref{thm:excont} obviously tends to zero if the integrands tend to zero, we immediately obtain the following corollary.
	
	\begin{corollary}
		\label{cor:conv_exact}
		Let \Cref{as:cont} hold and let $(A,B)$ be exactly controllable in time $t_c>0$. Let $(x,u)$ satisfy the turnpike property of \Cref{as:tp} with the intervals $[t_1(T),t_2(T)]$ and assume that the adjoints satisfy $\rho:=\sup_{T\ge 0}\|\delta \lambda\|_{C(t_1(T),t_2(T);{Y})}<\infty$. Then $\lambda$ satisfies the interval turnpike property from \Cref{def:adtp} with $s_1(T)=t_1(T)+t_c$ and $s_2(T)=t_2(T)$.
	\end{corollary}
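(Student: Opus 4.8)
The plan is to feed the turnpike smallness of $(x,u)$ through \Cref{as:cont} into the right-hand side of \Cref{thm:excont}, using the shifted left endpoint $s_1(T)=t_1(T)+t_c$ precisely so that the backward integration window $[t-t_c,t]$ appearing in that estimate never leaves the turnpike interval $[t_1(T),t_2(T)]$. Concretely, I would first check that the candidate interval is admissible: for $t\in[s_1(T),s_2(T)]=[t_1(T)+t_c,t_2(T)]$ one has $t-t_c\geq t_1(T)$ and $t\leq t_2(T)$, hence $[t-t_c,t]\subseteq[t_1(T),t_2(T)]$. Moreover $\theta(T):=s_2(T)-s_1(T)=\nu(T)-t_c$ inherits strict monotonicity and unboundedness from $\nu$, and is nonnegative for $T$ large since $\nu(T)\to\infty$; thus the structural requirements of \Cref{def:adtp} are met.

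Next, fix $\varepsilon>0$. By \Cref{as:tp} there is $T_0$ such that $\|\delta x(s)\|_X+\|\delta u(s)\|_U$ is arbitrarily small for all $s\in[t_1(T),t_2(T)]$ and all $T\geq T_0$. Continuity of the superposition operators (\Cref{as:cont}) then makes the four remainder terms $r_{f_x}(s)$, $r_{f_u}(s)$, $r_{J_x}(s)$, and $r_{J_u}(s)$ uniformly small in their respective operator and function-space norms on this interval. To handle the bilinear contributions I would invoke the uniform bound $\|\delta\lambda(s)\|_Y\leq\rho$, estimating for instance $\|r_{f_u}(s)^*\delta\lambda(s)\|_U\leq\|r_{f_u}(s)\|_{L(U,Y)}\rho$ and analogously for the $Y$-valued term. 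Consequently the integrand in \Cref{thm:excont} is dominated by a quantity that tends to zero uniformly on $[t-t_c,t]$.

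The conclusion is then immediate: since the integration window has fixed length $t_c$ independent of $T$, integrating the vanishing integrand gives $\|\delta\lambda(t)\|_Y^2\leq\varepsilon^2$ for all $t\in[s_1(T),s_2(T)]$ and all $T$ sufficiently large, which is exactly the estimate of \Cref{def:adtp}. The only point that genuinely deserves attention is the role of the boundedness hypothesis $\rho<\infty$: the estimate of \Cref{thm:excont} is quadratic, and the coupling terms $r_{f_x}^*\delta\lambda$ and $r_{f_u}^*\delta\lambda$ multiply the (vanishing) remainders against $\delta\lambda$ itself. Without an a priori $C$-bound on $\delta\lambda$ over the turnpike interval one could not conclude that these products vanish, so this hypothesis — rather than any delicate estimate — is what closes the argument.
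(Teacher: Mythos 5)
Your proposal is correct and follows the same route as the paper: the paper's proof simply invokes \Cref{thm:excont} and observes that its right-hand side vanishes as the integrands do, which is exactly the mechanism you spell out (shifting the left endpoint by $t_c$, feeding \Cref{as:tp} through \Cref{as:cont}, and using $\rho<\infty$ to control the bilinear terms). Your write-up is in fact more detailed than the paper's one-line proof, and your closing remark on why the boundedness hypothesis is indispensable matches the paper's discussion.
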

	\begin{proof}
		Follows immediately from Theorem \ref{thm:excont}.
	\end{proof}
	\begin{remark}\label{rem:addterm2} Similar to Remark \ref{rem:addterm}, it is easily seen from the proof of \Cref{thm:excont} that if we add a term $\sigma_T(t)$ with $\|\sigma_T\|_{C(t-t_c,t;{Y})} \le \bar\sigma_T < \infty$ and $\rho_T(t)$  on the right hand sides of \eqref{eq:extremal_delta} and \eqref{eq:eq_delta}, respectively, then  the result of Theorem \ref{thm:excont} changes to
		\begin{align}
		\|\delta \lambda(t)\|_{Y}^2 \leq c\int_{t-t_c}^t  \|r_{f_u}(s)^*\delta &\lambda(s) + r_{J_u}(s)+ \sigma_T(s)\|^2_U  \\+\|&-r_{f_x}(s)^*\delta \lambda(s)\nonumber
		+r_{J_x}(s)  + \rho_T(s)\|_{Y}^2\,ds.\label{eq:addterm2}
		\end{align}\end{remark}
	We then obtain as a counterpart for the inequality in \Cref{def:adtp} the bound
	\begin{align*}
	\|\delta \lambda(t)\|_{Y} \leq \varepsilon + c(\bar \sigma_T+\bar \rho_T)\qquad \forall t\in [s_1(T),s_2(T)],\, T\ge T_0.
	\end{align*}
	
	\section{Stabilizable systems with finite-dimensional unstable part}
	\label{sec:finitedim}
In this section we extend our results to exponentially detectable $(A^*,B^*)$, where the unstable part of $A^*$ is finite-dimensional and $B^*$ has finite rank. We note that this includes all finite-dimensional systems with stabilizable linearization. In order to define the correct setting for infinite-dimensional systems, we present the spectrum decomposition assumption as follows.
	\begin{definition}[{\cite[Def.\ 5.2.5]{Curtain1995}}]
		\label{def:sda}
		Denoting $\sigma^+(A) := \sigma(A)\cap \left\{ s \in
		\mathbb{C}\!:\! \operatorname{Re}s\! \geq\! 0 \right\}$ and $\sigma^-(A)$
		= $\sigma(A)\cap \left\{ s \in \mathbb{C}\!:\! \operatorname{Re}s\! <\!
		0 \right\}$, an operator $A$ satisfies the spectral decomposition assumption
		if $\sigma^+(A)$ is bounded and separated from $\sigma^-(A)$ in such
		a way that a rectifiable, simple, closed curve $\Gamma$ can be drawn
		so as to enclose an open set containing $\sigma^+(A)$ in its
		interior and $\sigma^-(A)$ in its exterior.
	\end{definition}
	
	\begin{remark} \label{rem:sda}
		Classes of operators satisfying the spectrum decomposition assumption
		include, e.g., delay equations \cite[Sec.\ 2.4]{Curtain1995} and
		Riesz-spectral operators with a pure point spectrum and only finitely
		many eigenvalues in $\sigma^+(A)$. More concrete examples of the latter
		are compact perturbation of the Laplace operator, i.e., $A = \Delta +
		c^2I$ for $c \in \mathbb{R}$ or models of damped vibrations such as 
		$$
		A = 
		\begin{bmatrix}
		0 & I \\ -A_0 & -D
		\end{bmatrix}
		$$
		where $A_0$ is a positive operator and $D$ is an unbounded damping operator
		(see, e.g.,\cite{JacTru08} and the Euler-Bernoulli example with
		Kelvin-Voigt damping).
	\end{remark}
	
	If $A^*$ satisfies the decomposition assumption, by
	\cite[Lem.\ 2.5.7]{Curtain1995} the decomposition of the spectrum
	induces a corresponding decomposition of ${Y}$. Defining the spectral
	projection $P$ by 
	$$
	Py_0 := \frac{1}{2\pi i} \int\limits_{\Gamma} (sI - A^*)^{-1}y_0\,ds
	$$
	for $y_0\in Y$, where $\Gamma$ from \Cref{def:sda} is traversed once in the
	positive direction, we obtain the decomposition ${Y} = {Y}_u \oplus {Y}_s$,
	where ${Y}_u = P{Y}$ and ${Y}_s = (I-P){Y}$.  Moreover, the spectral projection yields a linear coordinate transform such that the pair $(A^*,B^*)$ can be transformed into the form
	\begin{equation}
	\label{eq:sd}
	\widetilde{A^*} = 
	\begin{bmatrix}
	A^*_u & 0 \\ 0 & A^*_s
	\end{bmatrix},  \qquad
	\widetilde{B^*} = 
	\begin{bmatrix}
	B_u^* & B_s^*
	\end{bmatrix}
	\end{equation}
	where $A^*_u, B^*_u, A^*_s, B^*_s$ are restrictions of $A^*$ and $B^*$ to ${Y}_u$ and ${Y}_s$, respectively. Note that $A^*_u$ and $B^*_u$ are bounded operators. We impose the following assumption on $A^*$.
	\begin{assumption} \label{as:sda}
		$A^*$ satisfies the spectrum decomposition assumption such that it has
		the decomposition according to \eqref{eq:sd}, where $A^*_u$ is
		finite-di\-men\-sio\-nal and $A^*_s$ is exponentially stable.
	\end{assumption}
	
	If we split up the transformed adjoint accordingly via 
	\begin{equation}\label{eq:adjtrafo} \widetilde{\delta \lambda} = \left(\begin{array}{c} \delta\lambda_u \\ \delta\lambda_s \end{array}\right),\end{equation}
	then the equations \eqref{eq:extremal_delta} and \eqref{eq:eq_delta} attain the form
	\begin{align}
	\dot{\delta\lambda_u} & = -A_u^*\delta\lambda_u - \tilde r_1^*\delta\lambda_u - \tilde r_2^*\delta\lambda_s + \tilde r_3\label{eq:adj1}\\
	\dot{\delta\lambda_s} & = -A_s^*\delta\lambda_s - \tilde r_7^*\delta\lambda_u - \tilde r_8^*\delta\lambda_s + \tilde r_9\label{eq:adj3}\\
	0 & = B^*_u\delta\lambda_u + B^*_s\delta\lambda_s + \tilde r_4\delta\lambda_u + \tilde r_5 \delta\lambda_s + \tilde r_6. \label{eq:adj4}
	\end{align}
	Here, the terms $\tilde r_j$ are derived via coordinate transformation and splitting from the remainder terms in \eqref{eq:extremal_delta}--\eqref{eq:delta_inival} and---up to multiplication by appropriate constants---satisfy the same estimates as these remainder terms.
	Using this decomposition, we can prove the following theorem.
	
	\begin{theorem}[$(A^*,B^*)$ exponentially detectable, $B^*$ has finite
		rank]\label{thm:parstab} Let \Cref{as:cont} hold. Let $(x,u)$
		satisfy the turnpike property of \Cref{as:tp} on
		$[t_1(T),t_2(T)]$ and assume that $\rho:=\sup_{T\ge 0}\|\delta
		\lambda\|_{C(t_1(T),t_2(T);{Y})}<\infty$. Let \Cref{as:sda} hold and
		further assume that $B^*$ has finite rank and $(A^*, B^*)$ is exponentially
		detectable. 
Then $\lambda$ satisfies the interval turnpike property from \Cref{def:adtp}.
	\end{theorem}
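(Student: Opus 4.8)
The plan is to exploit the block structure \eqref{eq:adj1}--\eqref{eq:adj4} induced by \Cref{as:sda} and to treat the exponentially stable block $\delta\lambda_s$ and the finite-dimensional unstable block $\delta\lambda_u$ by the two results of \Cref{sec:stableorcontrollable}: \Cref{thm:expstab} together with \Cref{rem:addterm} for $\delta\lambda_s$, and the exact-controllability estimate of \Cref{thm:excont}/\Cref{cor:conv_exact} together with \Cref{rem:addterm2} for $\delta\lambda_u$. The coupling between the two blocks enters only through the transformed remainder terms $\tilde r_j$, which vanish by \Cref{as:tp} and \Cref{as:cont}, and through the cross-terms $\delta\lambda_u,\delta\lambda_s$, which in each sub-argument are either multiplied by such a vanishing remainder or controlled because the companion block has already been shown to turnpike. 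Since the stable block only uses \emph{boundedness} of $\delta\lambda_u$, no circular dependence arises.

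For the stable block I would rewrite \eqref{eq:adj3} as $\dot{\delta\lambda_s}=-A_s^*\delta\lambda_s-\tilde r_8^*\delta\lambda_s+(\tilde r_9-\tilde r_7^*\delta\lambda_u)$ and view the term $-\tilde r_7^*\delta\lambda_u$ as the additional forcing $\sigma_T$ of \Cref{rem:addterm}. As $A_s^*$ is exponentially stable by \Cref{as:sda}, \Cref{thm:expstab} and \Cref{rem:addterm} give, on a subinterval of the form $[t_1(T),(t_1(T)+t_2(T))/2]$ (so that the terminal contribution decays as $T\to\infty$), a bound of the type $\|\delta\lambda_s(t)\|_{Y_s}\le \varepsilon+2c\,\|\tilde r_7^*\delta\lambda_u\|$. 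Because $\tilde r_7$ inherits the vanishing of $r_{f_x}$ and $\|\delta\lambda_u\|\le\rho$, the forcing tends to zero, so $\delta\lambda_s$ satisfies the interval turnpike property on this subinterval.

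For the unstable block I would first reduce the detectability hypothesis. Under \Cref{as:sda} the operator $A^*=\operatorname{diag}(A_u^*,A_s^*)$ is block diagonal with $\sigma(A_u^*)\subset\{\operatorname{Re}s\ge 0\}$; exponential detectability of $(A^*,B^*)$ then forces the finite-dimensional pair $(A_u^*,B_u^*)$ to be detectable, and since every mode of $A_u^*$ lies in the unstable region, this is equivalent to \emph{observability} of $(A_u^*,B_u^*)$, i.e.\ to exact controllability of $(A_u,B_u)$ in some time $t_c>0$ and hence to the observability inequality \eqref{eq:obs_bounded} on $Y_u$. Applying the argument of \Cref{thm:excont} via \Cref{rem:addterm2} to \eqref{eq:adj1} and \eqref{eq:adj4}, I identify $-\tilde r_1^*\delta\lambda_u+\tilde r_3$ and $\tilde r_4\delta\lambda_u+\tilde r_6$ with the genuine remainders and let $\rho_T:=-\tilde r_2^*\delta\lambda_s$ and $\sigma_T:=B_s^*\delta\lambda_s+\tilde r_5\delta\lambda_s$ play the role of the added forcings. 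On the interval $[t_1(T)+t_c,(t_1(T)+t_2(T))/2]$, whose integration window $[t-t_c,t]$ lies inside the interval on which $\delta\lambda_s$ is already small, both $\bar\sigma_T$ and $\bar\rho_T$ are small (using boundedness of $B_s^*$ and the vanishing of $\tilde r_2,\tilde r_5$), so $\delta\lambda_u$ turnpikes as well.

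Intersecting the two intervals, I would conclude that $\widetilde{\delta\lambda}=(\delta\lambda_u,\delta\lambda_s)$, and hence $\delta\lambda$, satisfies \Cref{def:adtp} on $[s_1(T),s_2(T)]=[t_1(T)+t_c,(t_1(T)+t_2(T))/2]$, whose length $\nu(T)/2-t_c$ is strictly increasing and unbounded by \Cref{as:tp}. I expect the main obstacle to be precisely the reduction of the infinite-dimensional detectability assumption to observability of the finite-dimensional pair $(A_u^*,B_u^*)$, and thus to the observability inequality required by \Cref{thm:excont}; this is where the finite-rank assumption on $B^*$ and the spectral structure of \Cref{as:sda} are essential, whereas the bookkeeping of the coupling terms and of the nested turnpike intervals is then routine.
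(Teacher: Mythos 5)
Your proposal is correct and follows essentially the same route as the paper's proof: decompose via \Cref{as:sda}, first treat $\delta\lambda_s$ with \Cref{thm:expstab}/\Cref{rem:addterm} taking $\sigma_T=-\tilde r_7^*\delta\lambda_u$ (small by boundedness of $\delta\lambda_u$ and vanishing of $\tilde r_7$), then treat $\delta\lambda_u$ with \Cref{thm:excont}/\Cref{rem:addterm2} using the already-established smallness of $\delta\lambda_s$ in the coupling terms, with observability of the finite-dimensional pair $(A_u^*,B_u^*)$ obtained from exponential detectability via the Hautus condition, and the final interval shifted by $t_c$. The only deviations are immaterial bookkeeping: your $\sigma_T$/$\rho_T$ labels are swapped relative to the paper's text (which is itself inconsistent with its displayed inequality \eqref{eq:addterm2}), and your midpoint $(t_1(T)+t_2(T))/2$ is the evidently intended reading of the paper's $s_2(T)=(t_2(T)-t_1(T))/2$.
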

	\begin{proof}
		First note that the claimed property holds for $\delta\lambda$ if and only if it holds for the transformed adjoint $\widetilde{\delta\lambda}$ from \eqref{eq:adjtrafo}. The property for $\widetilde{\delta\lambda}$, in turn, holds if and only if it holds for the two components $\delta\lambda_u$ and $\delta\lambda_s$. Moreover, note that the assumed exponential detectability of $(A^*, B^*)$ implies that the finite-dimensional pair $(A_u^*, B_u^*)$ is (exponentially) detectable, and hence, (exactly) observable by Hautus rank condition \cite[Def.\ 1.2.6]{Curtain1995}.
		
		We start by applying the extension of Theorem \ref{thm:expstab} described in Remark \ref{rem:addterm} to $\delta\lambda_s$, with $\sigma_T = - \tilde r^*_7 \delta\lambda_u$. We note that the fact that equation \eqref{eq:adj4} contains additional terms compared to equation \eqref{eq:eq_delta} does not affect the applicability of Theorem \ref{thm:expstab} and Remark \ref{rem:addterm}, because equation \eqref{eq:eq_delta} is not used in its proof. Due to the uniform boundedness of $\delta\lambda$ which implies uniform boundedness of $\delta\lambda_u$, $\sigma_T$ tends to zero as $T\to\infty$ on $[t_1(T),t_2(T)]$. Hence, we obtain the desired property for $\delta\lambda_s$ on an interval $[\tilde s_1(T), \tilde s_2(T)]$. We note that by the construction in the proof of Theorem \ref{thm:expstab} we obtain $[\tilde s_1(T), \tilde s_2(T)]\subset[t_1(T),t_2(T)]$.
		
		Now for $\delta\lambda_u$ we use the extension of Theorem \ref{thm:excont} described in Remark \ref{rem:addterm2} with $\sigma_T = -\tilde r_2^*\delta\lambda_s$ and $\rho_T = B_s^*\delta\lambda_s-\tilde r_5^*\delta\lambda_s$, on $[\tilde s_1(T), \tilde s_2(T)]$. Since all terms become arbitrarily small on $[\tilde s_1(T), \tilde s_2(T)]$ as $T\to\infty$, we obtain the desired estimate for $\delta\lambda_u$ on $[s_1(T),s_2(T)]$ with $s_1(T)=\tilde s_1(T) + t_c$ and $s_2(T)=\tilde s_2(T)$.
	\end{proof}
	
	\begin{remark}
		Note that if $B^*$ has finite rank and \Cref{as:sda} is satisfied,
		$(A^*,B^*)$ is exponentially detectable in particular if $(A^*,B^*)$ is
		approximately observable: in that case both
		$(A_s^*,B_s^*)$ and $(A_u^*, B_u^*)$ are approximately observable \cite[Lem.\ 9.7.2]{StaBook}, which
		for the finite-dimensional pair $(A_u^*,B_u^*)$ coincides with exact
		observability. 
	\end{remark}

\begin{remark} For linear quadratic problems, detectability and stabilizability imply an exponential turnpike property and thus an interval turnpike property for states, controls and adjoints for a very general class of infinite dimensional systems \cite[Thm.\ 17]{Gruene2019}.
\end{remark}
 
	\begin{remark} The assumptions on $(A^*,B^*)$ in \Cref{thm:parstab} are in particular satisfied if $(A^*,B^*)$ is finite-dimensional and detectable. In that case, the result can be alternatively proven by
		using the decomposition into the observable and nonobservable subspaces of $(A^*,B^*)$, which is of the form
		$$
		\widetilde {A^*} = \begin{bmatrix}
		A^*_1 & 0 \\ A_2^* & A_3^*
		\end{bmatrix}, \qquad
		\widetilde {B^*} = \begin{bmatrix}
		B_1^* & 0 
		\end{bmatrix},
		$$
		where $(A^*_1,B^*_1)$ is (exactly) observable and $A^*_3$ is (exponentially) stable (the unstable subspace is contained in the observable subspace by the Hautus rank condition). Then the equations for $\widetilde{\delta\lambda}$ become 
		\begin{align}
		\dot{\delta\lambda_1} & = -A_1^*\delta\lambda_1 - \tilde r_1^*\delta\lambda_1 - \tilde r_2^*\delta\lambda_2 + \tilde r_3\\
		\dot{\delta\lambda_2} & = -A_2^*\delta\lambda_1 - A_3^*\delta\lambda_2 - \tilde r_7^*\delta\lambda_1 - \tilde r_8^*\delta\lambda_2 + \tilde r_9\\
		0 & = B^*_1\delta\lambda_1 + \tilde r_4\delta\lambda_1 + \tilde r_5 \delta\lambda_2 + \tilde r_6.
		\end{align}
		For proving Theorem \ref{thm:parstab} we can then proceed similarly as above, but in reverse order: We first obtain the desired estimate for $\delta\lambda_1$ using Theorem \ref{thm:excont} and Remark \ref{rem:addterm2} and then obtain the estimate for $\delta\lambda_2$ using Theorem \ref{thm:expstab}, Remark \ref{rem:addterm} and the estimate for $\delta\lambda_1$ from the first part of the proof.
		
		Note that the observability decomposition has very limited use for infinite-dimensional systems as the nonobservable subspace is the largest $T^*(t)$-invariant subspace in ${Y}$ contained in $\ker B^*$ \cite[Lem.\ 4.1.18]{Curtain1995}. For example, if $(A^*,B^*)$ is approximately observable, then the nonobservable subspace is the trivial subspace $\{0\}$ and the observability decomposition is redundant.
	\end{remark}
%
		
	\section{Particular case of an analytic semigroup}
	\label{sec:analytic}
	We briefly present a refined version of \Cref{thm:expstab} for the case where $A^*$ generates an analytic semigroup. In this case, we can improve the estimate in \Cref{def:adtp} by using stronger norms. To this end, we define the space 
	\begin{align*}
	W^{A^*}(0,T):= \left\{v\in L_2(0,T;{Y})\,:\,v'\in L_2(0,T;{Y})\right\}\cap L_2(0,T;D(A^*)).
	\end{align*}
	endowed with the norm
	\begin{align*}
	\|v\|_{W^{A^*}(0,T)}	:= \|v'\|^2_{L_2(0,T,{Y})} + \|v\|^2_{L_2(0,T;{Y})}+\|A^*v\|^2_{L_2(0,T;{Y})}.
	\end{align*}
	It was shown in, e.g., \cite[Pt.\ II-1, Rem.\ 4.2]{Bensoussan2007} that 
	\begin{align}
	\label{eq:embedding}
	W^{A^*}(0,T)\hookrightarrow C(0,T;(D(A^*),{Y})_{\frac{1}{2}}),
	\end{align} with embedding constant $c_E>0$ independently of $T$, where $(D(A^*),{Y})_{\frac{1}{2}}$ denotes the real interpolation space as defined in \cite[Pt.\ II-1, Sec.\ 4.3]{Bensoussan2007}. If $A^*$ generates an exponentially stable semigroup, this interpolation space can be shown to be isomorphic to the domain of the fractional power $(A^*)^{\frac{1}{2}}$ in many applications, cf.\ \cite[Sec.\ 0.2.1]{Lasiecka2000}.
	\begin{theorem}
		\label{thm:analytic}
		Let the assumptions of \Cref{thm:expstab} hold. Assume additionally that the semigroup $(\mathcal{T}^*(t))_{t\geq 0}$ is analytic. Suppose that for each $\tilde{\varepsilon}>0$ there is $\tilde{T}_0>0$ such that
		\begin{align*}
		\int_{t_1(T)}^{t_2(T)}\|r_{f_x}(s)\|_{L({X},{Y})} + \|r_{J_x}(s)\|_{Y}\,ds \leq \tilde{\varepsilon} \qquad \forall T\geq \tilde{T}_0.
		\end{align*} Then the interval turnpike property for adjoints from \Cref{def:adtp} holds and, in addition, for each $\varepsilon>0$ there is $T_0>0$ such that the adjoints satisfy
				\begin{align*}
		\|\delta \lambda\|_{W^{A^*}(t,t_2(T))} +\|\delta \lambda(t)\|_{(D(A^*),{Y})_{\frac{1}{2}}}\leq \varepsilon \qquad \forall t\in [s_1(T),s_2(T)],\,T\geq T_0.
		\end{align*}
	\end{theorem}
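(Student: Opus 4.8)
The first assertion, the interval turnpike property of \Cref{def:adtp}, is exactly the conclusion of \Cref{thm:expstab}, all of whose hypotheses are assumed here; so the work lies entirely in the quantitatively stronger bound in the $W^{A^*}$- and interpolation-space norms. My plan is to apply maximal parabolic ($L_2$-)regularity for the analytic semigroup $(\mathcal{T}^*(t))_{t\ge 0}$ on the adjoint turnpike interval $[s_1(T),s_2(T)]$ furnished by \Cref{thm:expstab} and then to invoke the horizon-uniform embedding \eqref{eq:embedding}. Writing $g(s):=-r_{f_x}(s)^*\delta\lambda(s)+r_{J_x}(s)$, the adjoint equation \eqref{eq:extremal_delta} is the backward problem $\dot{\delta\lambda}=-A^*\delta\lambda+g$; the time reflection $w(\tau):=\delta\lambda(b-\tau)$ on an interval $[t,b]$ turns it into the forward Cauchy problem $\dot w=A^*w-\tilde g$ with $\tilde g(\tau):=g(b-\tau)$ and initial datum $w(0)=\delta\lambda(b)$, governed by the exponentially stable analytic generator $A^*$. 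For such problems classical maximal $L_2$-regularity gives, with a constant $C_{MR}$ that is independent of the interval length precisely because the semigroup decays exponentially,
\[
\|\delta\lambda\|_{W^{A^*}(t,b)}\le C_{MR}\Big(\|\delta\lambda(b)\|_{(D(A^*),Y)_{\frac12}}+\|g\|_{L_2(t,b;Y)}\Big),
\]
and I would take $b:=s_2(T)$, noting that by the construction in \Cref{thm:expstab} one has $t_2(T)-s_2(T)\to\infty$.

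I would then show that both terms on the right vanish as $T\to\infty$. The new hypothesis is an $L_1$-in-time bound on the remainder terms, whereas maximal regularity asks for an $L_2$-bound; the two are bridged by interpolation. By \Cref{as:tp} and \Cref{as:cont} the remainders tend to $0$ uniformly on $[t_1(T),t_2(T)]$, which together with the uniform bound $\rho$ yields $\|g\|_{L_\infty(t_1,t_2;Y)}\le\rho\,\|r_{f_x}\|_{C(t_1,t_2;L(X,Y))}+\|r_{J_x}\|_{C(t_1,t_2;Y)}$, bounded and in fact small, while the hypothesis gives $\|g\|_{L_1(t_1,t_2;Y)}\le(\rho+1)\tilde\varepsilon$. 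Hence $\|g\|_{L_2(t,b;Y)}\le\|g\|_{L_\infty(t_1,t_2;Y)}^{1/2}\,\|g\|_{L_1(t_1,t_2;Y)}^{1/2}\to0$.

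For the endpoint datum $\delta\lambda(b)=\delta\lambda(s_2(T))$ I would use the variation-of-constants formula \eqref{eq:mildsol} on $[s_2(T),t_2(T)]$, that is $\delta\lambda(s_2(T))=\mathcal{T}^*(t_2-s_2)\delta\lambda(t_2)+\int_{s_2}^{t_2}\mathcal{T}^*(\sigma-s_2)g(\sigma)\,d\sigma$, and estimate it in the interpolation norm through the analytic smoothing bound $\|(A^*)^{\frac12}\mathcal{T}^*(\tau)\|_{L(Y,Y)}\le C\tau^{-1/2}e^{-\mu\tau}$. The homogeneous part is then bounded by $C(t_2-s_2)^{-1/2}e^{-\mu(t_2-s_2)}\rho$, which vanishes because $t_2-s_2\to\infty$, and the convolution part by $\|g\|_{L_\infty(s_2,t_2;Y)}\int_0^\infty C\tau^{-1/2}e^{-\mu\tau}\,d\tau$, which vanishes by the uniform smallness of $g$; the improper integral converges since $\tau^{-1/2}$ is integrable at $0$. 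Thus $\|\delta\lambda(s_2(T))\|_{(D(A^*),Y)_{\frac12}}\to0$.

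Combining the three estimates, maximal regularity yields $\|\delta\lambda\|_{W^{A^*}(t,s_2(T))}\le\|\delta\lambda\|_{W^{A^*}(s_1,s_2)}\to0$ uniformly in $t\in[s_1(T),s_2(T)]$, and the embedding \eqref{eq:embedding}, whose constant $c_E$ is likewise independent of $T$, upgrades this to $\|\delta\lambda(t)\|_{(D(A^*),Y)_{\frac12}}\le c_E\,\|\delta\lambda\|_{W^{A^*}(s_1,s_2)}\to0$; choosing $T$ large makes both quantities at most $\varepsilon$. The crux of the argument is that \Cref{thm:expstab} and the uniform bound $\rho$ furnish control of $\delta\lambda$ only in the weak norm of $Y$, so everything rests on manufacturing control in the stronger trace space $(D(A^*),Y)_{\frac12}$ out of a mere $Y$-bound. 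This is exactly what forces the three nonroutine ingredients: analytic smoothing over the growing gap $[s_2(T),t_2(T)]$ to regularize the endpoint datum, the $L_1$-to-$L_2$ interpolation for the source, and --- most delicately --- a maximal-regularity constant that remains bounded as the horizon and hence the interval length tends to infinity, which is available only because $A^*$ generates an exponentially stable analytic semigroup.
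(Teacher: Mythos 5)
Your treatment of the first assertion (reduction to \Cref{thm:expstab}), your $L_1$-to-$L_\infty$ interpolation argument making the source $g=-r_{f_x}^*\delta\lambda+r_{J_x}$ small in $L_2$, and your insistence on a horizon-uniform maximal-regularity constant (available because $A^*$ generates an exponentially stable analytic semigroup) all coincide with the paper's argument, which invokes the same estimate from Bensoussan et al.\ and treats the source in exactly this way, albeit tersely. Where you genuinely depart from the paper is in the treatment of the terminal datum: you cut the maximal-regularity interval at $b=s_2(T)$ and control $\|\delta\lambda(s_2(T))\|_{(D(A^*),Y)_{\frac12}}$ by analytic smoothing of \eqref{eq:mildsol} across the growing gap $[s_2(T),t_2(T)]$. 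The paper instead applies the estimate on all of $[t,t_2(T)]$, keeps the uncontrolled quantity $\|\delta\lambda(t_2(T))\|_{(D(A^*),Y)_{\frac12}}$ on the right-hand side with an exponentially small prefactor $Me^{-\frac{\mu}{2}(t_2(T)-t)}$, and then \emph{absorbs} it into the left-hand side using the $T$-independent embedding \eqref{eq:embedding} (``subtracting on both sides''), so that this quantity is never estimated at all.

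This difference is not cosmetic; it leaves a genuine gap relative to the stated theorem. The claim is smallness of $\|\delta\lambda\|_{W^{A^*}(t,t_2(T))}$, the norm running all the way to $t_2(T)$, while your argument only yields smallness of $\|\delta\lambda\|_{W^{A^*}(t,s_2(T))}$, and the missing piece $\|\delta\lambda\|_{W^{A^*}(s_2(T),t_2(T))}$ cannot be recovered by your method: by the trace characterization of the real interpolation space, the terminal-layer (homogeneous) part of the backward solution near $t_2(T)$ has $W^{A^*}$-norm comparable to $\|\delta\lambda(t_2(T))\|_{(D(A^*),Y)_{\frac12}}$, a quantity for which the hypotheses give no bound whatsoever (only $\|\delta\lambda(t_2(T))\|_Y\le\rho$ is available), and which your entire construction is designed to circumvent. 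The paper's absorption trick is precisely the device that dispenses with any a priori control of this term; it is the one ingredient missing from your proof, and without it you have proven a strictly weaker statement. A secondary, fixable imprecision: your smoothing bound $\|(A^*)^{\frac12}\mathcal{T}^*(\tau)\|_{L(Y,Y)}\le C\tau^{-1/2}e^{-\mu\tau}$ controls the $D((A^*)^{\frac12})$-norm, and identifying that space with $(D(A^*),Y)_{\frac12}$ is exactly the isomorphism the paper is careful to claim only ``in many applications''; you should instead obtain $\|\mathcal{T}^*(\tau)\|_{L(Y,(D(A^*),Y)_{\frac12})}\le C\tau^{-1/2}e^{-\mu\tau/2}$ directly by interpolating $\|\mathcal{T}^*(\tau)\|_{L(Y,Y)}\le Me^{-\mu\tau}$ against $\|\mathcal{T}^*(\tau)\|_{L(Y,D(A^*))}\le C\tau^{-1}e^{-\mu\tau/2}$, which requires no such identification.
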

	\begin{proof}
		By exponential stability with decay rate $\mu > 0$, the scaled semigroup\\ \mbox{$(e^{\frac{\mu}{2}(t_2(T)-t)}\mathcal{T}^*(t_2(T)-t))_{t\geq 0}$} is still exponentially stable and its generator has the same domain as $A^*$, cf.\ \cite[Chap.\ III, Thm.\ 1.3]{Engel2000}.
		Hence we can apply a well-known estimate for exponentially stable analytic semigroups, cf.\ \cite[Pt.\ II-1, Prop.\ 3.7, Thm.\ 3.1]{Bensoussan2007} and for $\delta \lambda$ solving \eqref{eq:extremal_delta} we obtain that
		\begin{align*}
		&\|\delta \lambda\|_{W^{A^*}(t,t_2(T))} \\&\leq c \left(\|-r_{f_x}^*\delta \lambda + r_{J_x}\|_{L_2(t,t_2(T);{Y})} + Me^{-\frac{\mu}{2}(t_2(T)-t)}\|\delta \lambda(t_2(T))\|_{(D(A^*),{Y})_{\frac{1}{2}}}\right).
		\end{align*}
		Using the boundedness of $\delta \lambda$ and the integral convergence of $r_{f_x}$ and $r_{J_x}$, the first term approaches zero for $T \to \infty$. For the second term, we set $s_1(T) := t_1(T)$ and $s_2(T) = (t_2(T)-t_1(T))/2$ and recalling that $t_2(T)-t_1(T)\to\infty$ as $T\to\infty$, there is $T$ such that $Me^{-\frac{\mu}{2} (t_2(T)-t)}\le \frac{c_E}{2c}$ for all $t\in[s_1(T),s_2(T)]$. Thus we can estimate
		\begin{align*}
		Me^{-\frac{\mu}{2}(t_2(T)-t)}\|\delta \lambda(t_2(T))\|_{(D(A^*),{Y})_{\frac{1}{2}}} \leq \frac{c_E}{2c}\|\delta \lambda(t_2(T)))\|_{(D(A^*),{Y})_{\frac{1}{2}}}.
		\end{align*}
		Using the embedding \eqref{eq:embedding} with $T$-independent embedding constant $c_E$ and subtracting  $\frac{c_E}{2}\|\delta \lambda(t_2(T)))\|_{(D(A^*),{Y})_{\frac{1}{2}}}$ on both sides yields the result.
	\end{proof}
	\begin{remark}
		The assumption of the $L_2$-convergence of the remainder terms in \Cref{thm:analytic} is satisfied if the convergence of state and control to the turnpike is, e.g., exponential as in \Cref{rem:exp_tp}.
	\end{remark}
	\section{Discussion of assumptions}
	\label{sec:disc}
	In this part we will briefly give sufficient conditions to render the interval turnpike of the primal variables as our main assumption \Cref{as:tp} fulfilled. These conditions can be guaranteed by a combination of strict dissipativity, controllability, and stabilizability. We further provide an example where the boundedness of the adjoint $\sup_{T\ge 0}\|\lambda-\bar{\lambda}\|_{C(t_2(T),T;H^1(\Omega))} < \infty$ assumed in \Cref{thm:expstab}, \Cref{thm:parstab}, and \Cref{cor:conv_exact} holds. 
First, we give a theorem stating that under suitable assumptions, measure turnpike implies interval turnpike.
\begin{theorem} Assume that the following conditions hold:
	\begin{enumerate}
		\item[(i)] The system has the measure turnpike property, i.e., for each $\varepsilon>0$ the Lebesgue measure of the set of times $t\in[0,T]$ for which
		\[ \|x(t)-\bar{x}\|_X + \|u(t)-\bar{u}\|_U > \varepsilon \]
		holds is bounded independent of $T$.
		\item[(ii)] The system is stabilizable at $\bar x$ with cost proportional to the initial distance to $\bar x$, i.e., there exists a constant $C>0$, a neighborhood $N_1$ of $\bar x$ and a function $\eta\in\mathcal{K}$ 
		such that for all $x_0\in N_1$ and each $T>0$ there is $u\in L_2(0,T;U)$ with $x(T)\in B_{C\|x_0-\bar x\|}(\bar x)$ and $\int_0^T J(x,u) dt \le TJ(\bar x, \bar u) + \eta(\|x_0-\bar x\|)$.
		\item[(iii)] The optimal value function 
		\[ V_T(x_0) := \min_{u\in L_2(0,T;U)} \int_0^T J(x,u) dt \]
		is approximately continuous in $\bar x$ uniformly in $T$ in the following sense: there are $\gamma\in\mathcal{K}$, $\rho\in\mathcal{L}$ and a neighborhood $N_2$ of $\bar x$ such that 
		\[ |V_T(x) - V_T(\bar x)| \le \gamma(\|x-\bar x\|_X) + \rho(T) \]
		for all $x\in N_2$ and all $T\ge 0$.
		\item[(iv)] Excursions from the turnpike are more expensive than staying in the turnpike, i.e., there is a function $\sigma\in\mathcal{K}_\infty$ and a neighborhood $N_3$ of $\bar x$ such that for each $T>0$ and each trajectory $x$ and control $u$ satisfying $x(0), x(T)\in N_3$ the inequality
		\begin{eqnarray*} \int_0^T J(x,u)dt & \ge & TJ(\bar x,\bar u) - \sigma(\|x(0)-\bar x\|_X) - \sigma(\|x(T)-\bar x\|_X)\\
			&& + \;\; \max_{t\in[0,T]} \sigma(\|x(t)-\bar x\|_X) \end{eqnarray*}
		holds. 
	\end{enumerate}
	Then the interval turnpike property from Assumption \ref{as:tp} holds.
	\label{eq:inttp}
\end{theorem}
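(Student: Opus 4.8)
The plan is to establish the interval turnpike for the state first, the control being recovered at the end. First I fix, once and for all, strictly increasing functions $t_1,t_2$ with $t_1(T)\to\infty$, $T-t_2(T)\to\infty$ and $\nu(T)=t_2(T)-t_1(T)\to\infty$ (e.g.\ $t_1(T)=\sqrt T$, $t_2(T)=T-\sqrt T$). Given $\varepsilon>0$, I pick a tolerance $\varepsilon_1\in(0,\varepsilon]$ so small that the closed $\varepsilon_1$-ball around $\bar x$ lies in $N_1\cap N_2\cap N_3$ and $2\gamma(\varepsilon_1)+2\sigma(\varepsilon_1)\le\tfrac12\sigma(\varepsilon)$. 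By the measure turnpike (i), the bad set $\{t:\|x(t)-\bar x\|_X+\|u(t)-\bar u\|_U>\varepsilon_1\}$ has measure at most some $L_1=L_1(\varepsilon_1)$ independent of $T$. Hence, for $T$ large, each of the windows $[t_1(T)-L_1-1,t_1(T)]$ and $[t_2(T),t_2(T)+L_1+1]$ (which have length $>L_1$ and lie in $[0,T]$) contains a good time; I call these $\hat t_1\le t_1(T)$ and $\hat t_2\ge t_2(T)$, so that $[t_1(T),t_2(T)]\subseteq[\hat t_1,\hat t_2]$ and $\|x(\hat t_i)-\bar x\|_X\le\varepsilon_1$, in particular $x(\hat t_i)\in N_3$. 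Applying the excursion estimate (iv) on $[\hat t_1,\hat t_2]$ and bounding the boundary terms by $\sigma(\varepsilon_1)$ gives
\[
\max_{t\in[\hat t_1,\hat t_2]}\sigma(\|x(t)-\bar x\|_X)\le\Big(\int_{\hat t_1}^{\hat t_2}J(x,u)\,dt-(\hat t_2-\hat t_1)J(\bar x,\bar u)\Big)+2\sigma(\varepsilon_1),
\]
so everything reduces to bounding the excess cost on the middle interval.

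This bound is the heart of the proof. By Bellman's principle of optimality the tail of the optimal trajectory is optimal, so $\int_{\hat t_i}^{T}J(x,u)\,dt=V_{T-\hat t_i}(x(\hat t_i))$, and subtracting the two identities yields $\int_{\hat t_1}^{\hat t_2}J(x,u)\,dt=V_{T-\hat t_1}(x(\hat t_1))-V_{T-\hat t_2}(x(\hat t_2))$. I would then apply the uniform continuity (iii) at both endpoints together with the elementary ``stay then optimize'' inequality $V_{S_1}(\bar x)-V_{S_2}(\bar x)\le(S_1-S_2)J(\bar x,\bar u)$ for $S_1\ge S_2$ (run $x\equiv\bar x$, $u\equiv\bar u$ on $[0,S_1-S_2]$, which is feasible because $(\bar x,\bar u)$ solves \eqref{SOCP}, and then the optimal $S_2$-policy), to obtain
\[
\int_{\hat t_1}^{\hat t_2}J(x,u)\,dt-(\hat t_2-\hat t_1)J(\bar x,\bar u)\le 2\gamma(\varepsilon_1)+\rho(T-\hat t_1)+\rho(T-\hat t_2).
\]
Combining the two displays with $[t_1(T),t_2(T)]\subseteq[\hat t_1,\hat t_2]$ gives $\max_{t\in[t_1(T),t_2(T)]}\sigma(\|x(t)-\bar x\|_X)\le 2\gamma(\varepsilon_1)+2\sigma(\varepsilon_1)+\rho(T-\hat t_1)+\rho(T-\hat t_2)$. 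Since $T-\hat t_i\to\infty$ and $\rho\in\mathcal L$, there is $T_0$ with $\rho(T-\hat t_1)+\rho(T-\hat t_2)\le\tfrac12\sigma(\varepsilon)$ for $T\ge T_0$; by the choice of $\varepsilon_1$ the right-hand side is then $\le\sigma(\varepsilon)$, and invertibility of $\sigma\in\mathcal K_\infty$ yields $\|x(t)-\bar x\|_X\le\varepsilon$ on $[t_1(T),t_2(T)]$, i.e.\ the state interval turnpike.

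The hard part is exactly the middle estimate above: a naive splitting of the excess cost lets the unavoidable end effects near $t=0$ and $t=T$ enter with a constant proportional to $L_1$, which blows up as $\varepsilon_1\to0$ and would only give a practical (fixed-neighborhood) turnpike. The telescoping bound on $V_{S_1}(\bar x)-V_{S_2}(\bar x)$ is what localizes these end effects and keeps the excess genuinely of order $\gamma(\varepsilon_1)+\rho(\cdot)$; producing this clean cancellation, rather than the excursion estimate (iv), is where I expect the real work to lie, and it is the reason conditions (ii) and (iii) are needed.

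Finally, for the control: the computation above controls only $\max_t\sigma(\|x(t)-\bar x\|_X)$, whereas Assumption \ref{as:tp} also requires $\|u(t)-\bar u\|_U$ to be small pointwise, which the measure turnpike (i) alone cannot deliver on a full interval. I would resolve this by reading the excursion penalty in (iv) in the joint norm $\|x(t)-\bar x\|_X+\|u(t)-\bar u\|_U$ — the form in which such an estimate naturally arises from strict dissipativity — in which case the identical argument bounds $\max_t\sigma(\|x(t)-\bar x\|_X+\|u(t)-\bar u\|_U)$ and gives the full claim; alternatively, under strong convexity of $J$ in $u$ the stationarity relation in \eqref{eq:dynamics:optcond} expresses $u$ as a Lipschitz function of the state and the adjoint, so that the state bound just proven, combined with boundedness of the adjoint, transfers to the control. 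Either way the result follows, but I regard the joint-norm form of (iv) as the clean hypothesis to adopt.
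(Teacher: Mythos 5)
Your proof of the state estimate is correct, and while its outer skeleton matches the paper's --- use (i) to find good times $\hat t_1,\hat t_2$ bracketing the designated interval, apply (iv) on $[\hat t_1,\hat t_2]$, and reduce everything to an upper bound on the excess cost $\int_{\hat t_1}^{\hat t_2}J(x,u)\,dt-(\hat t_2-\hat t_1)J(\bar x,\bar u)$ --- the way you obtain that bound is genuinely different. The paper builds an explicit comparison trajectory: it follows the optimal control up to $\hat t_1$, uses the cheap stabilizing control from (ii) on $[\hat t_1,\hat t_2]$, and then the optimal control for the remaining horizon; optimality of $(x,u)$, the dynamic programming principle, and (iii) (applied both at $x(\hat t_2)$ and at the comparison endpoint in $B_{C\tilde\varepsilon}(\bar x)$, which is where the constant $C$ from (ii) enters) then give the excess-cost bound $\eta(\tilde\varepsilon)+\gamma(C\tilde\varepsilon)+\gamma(\tilde\varepsilon)+\rho(T/8)$. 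You instead write the middle cost exactly as $V_{T-\hat t_1}(x(\hat t_1))-V_{T-\hat t_2}(x(\hat t_2))$ via the dynamic programming principle, transfer both endpoints to $\bar x$ by (iii), and close with the elementary inequality $V_{S_1}(\bar x)-V_{S_2}(\bar x)\le (S_1-S_2)J(\bar x,\bar u)$, which needs only feasibility of the constant trajectory --- guaranteed because $(\bar x,\bar u)$ solves \eqref{SOCP}. The payoff is that your argument never invokes hypothesis (ii), so it in fact proves a slightly stronger statement: (i), (iii), (iv) suffice. A minor additional benefit is that your interval $[\sqrt T,\,T-\sqrt T]$ is asymptotically longer than the paper's $[3T/8,\,5T/8]$; both choices are admissible for \Cref{as:tp} (after the trivial adjustment of your $t_1,t_2$ for small $T$ so that $0\le t_1(T)\le t_2(T)\le T$ and monotonicity hold).

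Your closing observation about the control is also well taken: condition (iv) as stated penalizes only the state deviation, so neither your argument nor the paper's yields the bound on $\|u(t)-\bar u\|_U$ that \Cref{as:tp} formally requires --- the paper's proof silently has the same limitation, ending with a bound on $\max_t\|x(t)-\bar x\|_X$ alone. Your proposed remedies (restating (iv) in the joint norm $\|x(t)-\bar x\|_X+\|u(t)-\bar u\|_U$, which is the form that arises from strict dissipativity as discussed in \Cref{rem:dissipativity}, or recovering $u$ from the stationarity condition under strong convexity) are both sensible ways to close this gap, with the caveat that the resulting control bound should be read in the essential-supremum sense since $u\in L_2(0,T;U)$.
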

\begin{proof} For each $\tilde\varepsilon>0$ that is sufficiently small such that $B_{\tilde\varepsilon}(\bar x)\subset N_1$ and $B_{C\tilde\varepsilon}(\bar x)\subset N_2\cap N_3$ denote by $\mu(\tilde\varepsilon)$ the bound on the Lebesgue measure from (i) for $\varepsilon=\tilde\varepsilon$. Then, for any $T\ge 8\mu(\tilde\varepsilon)$ there exist times $\hat t_1(T,\tilde\varepsilon)\in [T/8,3T/8]$ and $\hat t_2(T,\tilde\varepsilon)\in [5T/8,7T/8]$ such that
	\[ \|x(t)-\bar{x}\|_X + \|u(t)-\bar{u}\|_U \le \tilde\varepsilon \]
	for $t=\hat t_1(T,\tilde\varepsilon)$ and $t=\hat t_2(T,\tilde\varepsilon)$. The overall cost of the optimal solution can be written as 
	\[ \int_0^T J(x,u)dt = \int_0^{\hat t_1(T,\tilde \varepsilon)}J(x,u)dt  + \int_{\hat t_1(T,\tilde \varepsilon)}^{\hat t_2(T,\tilde \varepsilon)}J(x,u)dt + \int_{\hat t_2(T,\tilde \varepsilon)}^{T}J(x,u)dt, \]
	where we omit the argument ``$(t)$'' in the integrands in order to shorten the notation.
	In addition, we consider the control $\hat u$ and corresponding trajectory $\hat x$ that is constructed as follows: it follows the optimal control until time $\hat t_1(T,\tilde \varepsilon)$, then uses the control from (ii) with $x_0=\hat x(\hat t_1(T,\tilde \varepsilon))$ from $\hat t_1(T,\tilde \varepsilon)$ until $\hat t_2(T,\tilde \varepsilon)$, and finally it uses the optimal control for horizon $T-\hat t_2(T,\tilde \varepsilon)$ and initial condition $\hat t_2(T,\tilde \varepsilon)$. The overall cost of this trajectory can be decomposed in the same way 
	\[ \int_0^T J(\hat x, \hat u)dt = \int_0^{\hat t_1(T,\tilde \varepsilon)}J(\hat x,\hat u)dt  + \int_{\hat t_1(T,\tilde \varepsilon)}^{\hat t_2(T,\tilde \varepsilon)}J(\hat x,\hat u)dt + \int_{\hat t_2(T,\tilde \varepsilon)}^{T}J(\hat x,\hat u)dt. \]
	Now, by construction of $\hat x$ and $\hat u$ as well as (ii) and (iii) and the fact that tails of optimal trajectories are optimal trajectories, we get
	\begin{equation} \int_0^{\hat t_1(T,\tilde \varepsilon)} J(\hat x, \hat u)dt = \int_0^{\hat t_1(T,\tilde \varepsilon)} J(x, u)dt,\label{eq:p1}\end{equation}
	\begin{equation} \int_{\hat t_1(T,\tilde \varepsilon)}^{\hat t_2(T,\tilde \varepsilon)}J(\hat x,\hat u)dt \le (\hat t_2(T,\tilde \varepsilon)-\hat t_1(T,\tilde \varepsilon))J(\bar x, \bar u) + \eta(\tilde\varepsilon) \label{eq:p2}\end{equation}
	and
	\begin{equation}  \left| \int_{\hat t_2(T,\tilde \varepsilon)}^{T}J(\hat x,\hat u)dt - \int_{\hat t_2(T,\tilde \varepsilon)}^{T}J(x, u)dt\right| \le \gamma(C\tilde\varepsilon) + \gamma(\tilde\varepsilon) + \rho(T-\hat t_2(T,\tilde \varepsilon)).\label{eq:p3}\end{equation}
	Since $x$ and $u$ are optimal, we moreover obtain
\begin{eqnarray*} 
	&& \int_0^{\hat t_1(T,\tilde \varepsilon)}J(x,u)dt  + \int_{\hat t_1(T,\tilde \varepsilon)}^{\hat t_2(T,\tilde \varepsilon)}J(x,u)dt + \int_{\hat t_2(T,\tilde \varepsilon)}^{T}J(x,u)dt\\
	&& \le \;\; \int_0^{\hat t_1(T,\tilde \varepsilon)}J(\hat x,\hat u)dt  + \int_{\hat t_1(T,\tilde \varepsilon)}^{\hat t_2(T,\tilde \varepsilon)}J(\hat x,\hat u)dt + \int_{\hat t_2(T,\tilde \varepsilon)}^{T}J(\hat x,\hat u)dt,\end{eqnarray*}
	implying
	\begin{eqnarray*} 
\int_{\hat t_1(T,\tilde \varepsilon)}^{\hat t_2(T,\tilde \varepsilon)}J(x,u)dt
& \le & \int_0^{\hat t_1(T,\tilde \varepsilon)}J(\hat x,\hat u)dt - \int_0^{\hat t_1(T,\tilde \varepsilon)}J(x,u)dt\\
&&   + \; \int_{\hat t_1(T,\tilde \varepsilon)}^{\hat t_2(T,\tilde \varepsilon)}J(\hat x,\hat u)dt\\
&& + \; \int_{\hat t_2(T,\tilde \varepsilon)}^{T}J(\hat x,\hat u)dt - \int_{\hat t_2(T,\tilde \varepsilon)}^{T}J(x,u)dt.
\end{eqnarray*}
	Inserting \eqref{eq:p1}--\eqref{eq:p3} into this inequality, we arrive at
	\begin{eqnarray*} && \int_{\hat t_1(T,\tilde \varepsilon)}^{\hat t_2(T,\tilde \varepsilon)}J(x,u)dt\\
		&& \le \;\; (\hat t_2(T,\tilde \varepsilon)-\hat t_1(T,\tilde \varepsilon))J(\bar x, \bar u) + \eta(\tilde\varepsilon) + \gamma(C\tilde\varepsilon) + \gamma(\tilde\varepsilon) + \rho(T-\hat t_2(T,\tilde \varepsilon))\\
		&& \le \;\; (\hat t_2(T,\tilde \varepsilon)-\hat t_1(T,\tilde \varepsilon))J(\bar x, \bar u) + \eta(\tilde\varepsilon) + \gamma(C\tilde\varepsilon) + \gamma(\tilde\varepsilon) + \rho(T/8).
	\end{eqnarray*}
	From (iv) we thus obtain
	\begin{eqnarray*}
		&&(\hat t_2(T,\tilde \varepsilon)-\hat t_1(T,\tilde \varepsilon))J(\bar x, \bar u) + \eta(\tilde\varepsilon) + \gamma(C\tilde\varepsilon) + \gamma(\tilde\varepsilon) + \rho(T/8)\\
		&& \ge \;\; (\hat t_2(T,\tilde \varepsilon)-\hat t_1(T,\tilde \varepsilon))J(\bar x,\bar u) - \sigma(\|x(\hat t_1(T,\tilde \varepsilon))-\bar x\|_X)\\
		&& \qquad - \;\; \sigma(\|x(\hat t_2(T,\tilde \varepsilon))-\bar x\|_X)+ \max_{t\in[\hat t_1(T,\tilde \varepsilon),\hat t_2(T,\tilde \varepsilon)]} \sigma(\|x(t)-\bar x\|_X)\\
		&& \ge \;\; (\hat t_2(T,\tilde \varepsilon)-\hat t_1(T,\tilde \varepsilon))J(\bar x,\bar u) - \sigma(\tilde \varepsilon)\\
		&& \qquad - \;\; \sigma(C\tilde \varepsilon)+ \max_{t\in[\hat t_1(T,\tilde \varepsilon),\hat t_2(T,\tilde \varepsilon)]} \sigma(\|x(t)-\bar x\|_X).
	\end{eqnarray*}
	Using the bounds on $\hat t_1(T,\tilde \varepsilon)$ and $\hat t_2(T,\tilde \varepsilon)$ this implies 
	\[ \max_{t\in[3T/8,5T/8]} \|\hat x(t)-\bar x\|_X \le \sigma^{-1}(\eta(\tilde\varepsilon) +  \gamma(\tilde\varepsilon) + \sigma(\tilde\varepsilon) + \gamma(C\tilde\varepsilon) + \sigma(C\tilde\varepsilon) + \rho(T/8)).
	\]
	Now, defining $t_1(T)=3T/8$ and $t_2(T)=5T/8$, the property from Assumption \ref{as:tp} follows for every $\varepsilon > 0$ by setting $\tilde \varepsilon >0$ and $T\ge 8\mu(\varepsilon)$ so large that $\sigma^{-1}(\eta(\tilde\varepsilon) +  \gamma(\tilde\varepsilon) + \sigma(\tilde\varepsilon) + \gamma(C\tilde\varepsilon) + \sigma(C\tilde\varepsilon) + \rho(T/8))<\sigma$. 
\end{proof}
\begin{remark}
	\label{rem:dissipativity} The properties needed in the assumption of Theorem \ref{eq:inttp} follow from other well known properties of the optimal control problem under consideration:
	\begin{itemize}
		\item Condition (i) follows from strict dissipativity and Condition (ii). Strict dissipativity demands the existence of a storage function $S:X\to\mathbb{R}$, bounded from below, and a function $\alpha\in\mathcal{K}_\infty$ such that
		\[ S(x(t)) \le S(x(0)) + \int_0^t J(x(\tau),u(\tau)) - \alpha(\|x(\tau)-\bar x\|_X + \|u(\tau)-\bar u\|_U) d\tau \]
		holds along all solutions. In finite dimensions this is shown using exponential reachability of $\bar x$, which is implied by stabilizability, in \cite[Theorem 2]{Faulwasser2017}. In infinite dimensions the implication strict dissipativity $\Rightarrow$ measure turnpike is analyzed in \cite[Theorem 2]{Trelat18}.
		
		\item Condition (ii) follows from exponential stabilizability of $\bar x$ by an affine linear feedback $u = Kx+\bar u$ and continuity of $J$. This can be seen straightforwardly by integrating $J$ along the exponentially stable closed loop solution.
		
		\item Condition (iii) follows from strict dissipativity and (exact) local controllability around $\bar x$. A proof in discrete time can be found in \cite[Sec.\ 6]{Grue13}. This proof easily carries over to the continuous time setting of this paper. 
		
		\item Condition (iv) follows from strict dissipativity if the $\alpha$ in the strict dissipativity formulation grows quickly enough and the storage function is continuous in $\bar x$. Continuity implies that $S(x(0))$ and $S(x(T))$ can be bounded by the $\sigma$-terms and the growth condition on $\alpha$ implies the inequality
		\[ \int_0^t \alpha(\|x(\tau)-\bar x\|_X + \|u(\tau)-\bar u\|_U) d\tau \ge \max_{t\in[0,T]} \sigma(\|x(t)-\bar x\|_X).\]
	\end{itemize}

All theorems in this paper provide the interval turnpike property for the adjoints if the adjoints are bounded for horizon $T$ tending to infinity. In other words, the theorems state that the adjoints are either unbounded or they satisfy the turnpike property.  It is thus necessary to establish a bound on the adjoints in order to conclude the turnpike property. We end this section by providing an example where such a bound can be deduced.
\begin{example}[Boundedness of adjoint]
	\label{ex:boundedness}
	We give an example with $X=H^1_0(\Omega)$ and $Y=L_2(\Omega)$, where the bound $\sup_{T\ge 0}\|\lambda-\bar{\lambda}\|_{C(t_2(T),T;H^1(\Omega))} < \infty$ assumed in \Cref{thm:expstab,thm:parstab} and \Cref{cor:conv_exact} holds. Consider the running cost $J(x,u) = \frac{1}{2}\|x-x_d\|^2_{L_2(\Omega_o)} + \frac{1}{2}\|u-u_d\|^2_{L_2(\Omega_c)}$, $\mathcal{A}=\Delta$ endowed with Dirichlet boundary conditions and that $f(x,u)=f(x)$ is monotonously non-increasing, i.e., $f'(x)\leq 0$ with $f(0)=0$. Throughout this example, we consider by $c\geq 0$ a $T$-independent generic constant. By optimality of $(x,u)$ we obtain for any $(x_r,u_r)$ satisfying the dynamics, that
	\begin{align*}
	\|x\|^2_{L_2((0,T)\times \Omega_o)}&\leq \int_0^T J(x(t),u(t))\,dt\leq  \int_0^TJ(x_r(t),u_r(t))\,dt\\&= \frac12\left(\|x_r-x_d\|^2_{L_2((0,T)\times \Omega_o)} + \|u_r-u_d\|^2_{L_2((0,T)\times \Omega_c}\right)
	\end{align*}
	Inserting this into the adjoint equation yields, using parabolic regularity and $f'(x)\leq 0$, that
	\begin{align}
	\label{ex:eq:adjointest}
	\|\lambda(t)\|_{H^1(\Omega)} &\leq c\|x\|_{L_2((0,T)\times \Omega_o)}\\\nonumber &\leq c\left(\|x_r-x_d\|_{L_2((0,T)\times \Omega_o)} + \|u_r-u_d\|_{L_2((0,T)\times \Omega_c}\right)
	\end{align}
	for all $t\in [0,T]$. Thus, we get
	\begin{align*}
	\|\lambda(t)-\bar{\lambda}&\|_{C(0,T;H^1(\Omega))}\\&\leq c\left(\|\bar{\lambda}\|_{H^1(\Omega)}+\|x_r-x_d\|_{L_2((0,T)\times \Omega_o)} + \|u_r-u_d\|_{L_2((0,T)\times \Omega_c}\right)
	\end{align*}
	which yields the result if
	\begin{align*}
	\int_0^TJ(x_r,u_r) = \frac12\left(\|x_r-x_d\|^2_{L_2((0,T)\times \Omega_o)} + \|u_r-u_d\|^2_{L_2((0,T)\times \Omega_c)}\right)
	\end{align*}
	can be bounded independently of $T$ for any $u_r\in L_2((0,T)\times \Omega_c)$ and corresponding state $x_r$.
	The same argumentation also carries over to the case of Neumann boundary control. The case of boundary observation can not be included, as we do not have the required regularity of the adjoint to deduce \eqref{ex:eq:adjointest}.
\end{example}
\end{remark}
	\section{Numerical example}
	\label{sec:example}
	We present an example with a semilinear heat equation with Neumann boundary control:
	\begin{align*}
	x'-\Delta x + x^3 &= 0\qquad &&\text{ in } [0,T]\times \Omega, \\
	\frac{\partial x}{\partial \nu}&=u\qquad &&\text{ in } [0,T]\times \partial \Omega,\\
	x(0)&=x_0 \qquad &&\text{ in }\Omega,
	\end{align*}
	where $\frac{\partial x}{\partial \nu}$ is the outward unit normal derivative.
	As a cost function, we consider
	\begin{align*}
	\int_0^T J(x(t),u(t))dt=\int_0^T \frac{1}{2}\|x(t)-x_d\|_{L_2(\Omega)}^2 + \frac{1}{2}\|u(t)\|_{L_2(\partial \Omega)}^2\,dt.
	\end{align*}
	We choose $\Omega = [0,3]\times [0,1]$ as the spatial domain and the horizon $T=10$.
	Additionally, we set $x_0=0$ and the reference trajectory defined by
	\begin{align*}
	x_d(\omega) &:= g\left(\frac{10}{3}\left\|\omega - \begin{pmatrix}
	1.5\\0.5\end{pmatrix}\right\|\right),\\
	\text{where}\quad\qquad g(s) &:= \begin{cases}
	10e^{1-\frac{1}{1-s^2}} \qquad &s< 1\\
	0   &\text{else}.
	\end{cases}
	\end{align*}
	This static reference is depicted in \Cref{fig:ref}. The optimal control problem is solved with the C++-library for vector space algorithms \textit{Spacy}\footnote{https://spacy-dev.github.io/Spacy/}  using the finite element library \textit{Kaskade7} \cite{Goetschel2020}.
	\begin{figure}[H]
		\centering
		\scalebox{0.8}{\input{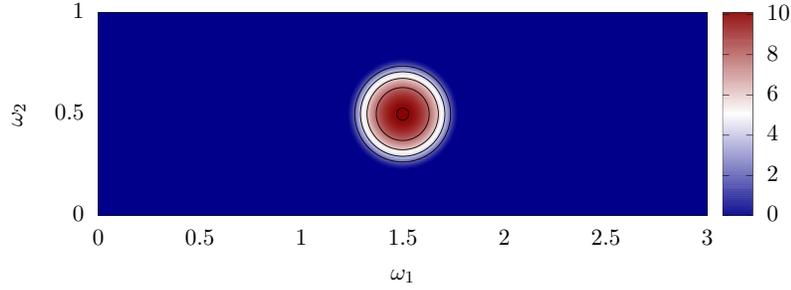}}
		\caption{Depiction of the static reference trajectory $x_d(\omega_1,\omega_2)$.}
		\label{fig:ref}
	\end{figure}
	For in-depth analysis of semilinear parabolic optimal control problems we refer the reader to \cite{Raymond1999} or \cite[Chap.\ 5]{Troeltzsch2010}. Considering the regularity of the static adjoint, for sufficiently smooth data we obtain that $\bar{\lambda}\in C(\bar{\Omega})$, cf.\ \cite{Casas1993}. We set $\mathcal{A}=\Delta$ and $\varphi(x)=x^3$ and denote the superposition operator corresponding to $\varphi'(x)=3x^2$ by $\Phi$.
	We numerically verify that the turnpike property of \Cref{as:tp} for the optimal state and control holds in $X=H^1(\Omega)$ and $U=L_2(\partial\Omega)$, cf. \Cref{fig:tp_state_control}. Thus, we depict in \Cref{fig:tp_state_control} the norm of state and adjoint over time and in \Cref{fig:turnpike_state_control2} a snapshot of the dynamic solution and compare it to the steady state solution.
	\begin{figure}[H]
		\centering
%
%
\definecolor{mycolor1}{rgb}{0.00000,0.44700,0.74100}%
\begin{tikzpicture}

\begin{axis}[%
width=4in,
height=0.793in,
at={(0in,1.2in)},
scale only axis,
xmin=-0.5,
xmax=10.5,
xtick={0,1,2,3,4,5,6,7,8,9,10},
xticklabels={\empty},
ymin=0,
ymax=1,
ylabel style={font=\color{white!15!black}},
ylabel={$\|x(t)\|_{H^1(\Omega)}$},
axis background/.style={fill=white},
title={Norm of optimal state over time}
]
\addplot [color=black, line width=2.0pt, forget plot]
  table[row sep=crcr]{%
0	0\\
0.1	0.248903645264741\\
0.2	0.292554245812156\\
0.3	0.327204602336484\\
0.4	0.364641477501348\\
0.5	0.401904131866604\\
0.6	0.436546363170142\\
0.7	0.467526396981906\\
0.8	0.49462345025298\\
0.9	0.518010171163363\\
1	0.538025246167201\\
1.1	0.555060075639231\\
1.2	0.569503954727109\\
1.3	0.581718934718247\\
1.4	0.592029810615113\\
1.5	0.600721785718475\\
1.6	0.60804192575653\\
1.7	0.614202346074852\\
1.8	0.619384041759536\\
1.9	0.623740791563295\\
2	0.627402851234184\\
2.1	0.630480308084337\\
2.2	0.633066053669631\\
2.3	0.635238376545315\\
2.4	0.637063199646975\\
2.5	0.63859599679353\\
2.6	0.639883425771434\\
2.7	0.640964714689105\\
2.8	0.641872835656846\\
2.9	0.642635496414479\\
3	0.643275976890204\\
3.1	0.643813834148302\\
3.2	0.644265495928094\\
3.3	0.64464476005826\\
3.4	0.644963214463257\\
3.5	0.645230590248757\\
3.6	0.645455058433629\\
3.7	0.645643479254062\\
3.8	0.645801611567484\\
3.9	0.645934288697439\\
4	0.646045566056115\\
4.1	0.64613884503223\\
4.2	0.646216976915326\\
4.3	0.646282350022944\\
4.4	0.64633696268736\\
4.5	0.646382484328716\\
4.6	0.646420306478687\\
4.7	0.646451585312645\\
4.8	0.646477276989324\\
4.9	0.646498166877582\\
5	0.646514893563202\\
5.1	0.646527968369338\\
5.2	0.646537790987174\\
5.3	0.646544661694546\\
5.4	0.646548790536041\\
5.5	0.646550303745294\\
5.6	0.64654924760593\\
5.7	0.64654558986942\\
5.8	0.646539218773511\\
5.9	0.646529939631726\\
5.99999999999999	0.646517468890286\\
6.09999999999999	0.646501425471741\\
6.19999999999999	0.646481319141932\\
6.29999999999999	0.646456535546381\\
6.39999999999999	0.646426317461088\\
6.49999999999999	0.646389741687732\\
6.59999999999999	0.646345690891364\\
6.69999999999999	0.64629281952545\\
6.79999999999999	0.646229512810302\\
6.89999999999999	0.646153837520996\\
6.99999999999999	0.646063483093366\\
7.09999999999999	0.64595569126412\\
7.19999999999999	0.645827172114252\\
7.29999999999999	0.645674003972542\\
7.39999999999999	0.645491514144112\\
7.49999999999999	0.645274136840183\\
7.59999999999999	0.645015243976286\\
7.69999999999999	0.644706943646635\\
7.79999999999999	0.644339840029534\\
7.89999999999999	0.643902747171408\\
7.99999999999999	0.643382347444331\\
8.09999999999999	0.642762783334352\\
8.19999999999999	0.642025168376137\\
8.29999999999999	0.641146999152422\\
8.39999999999999	0.640101444756375\\
8.49999999999999	0.638856482050519\\
8.59999999999999	0.637373832966191\\
8.69999999999999	0.635607641638938\\
8.79999999999998	0.633502800841153\\
8.89999999999998	0.630992794044465\\
8.99999999999998	0.627996855880582\\
9.09999999999998	0.624416166900995\\
9.19999999999998	0.620128700116674\\
9.29999999999998	0.614982287887041\\
9.39999999999998	0.608785683900332\\
9.49999999999998	0.601298461590731\\
9.59999999999998	0.592224215346476\\
9.69999999999998	0.581222503554813\\
9.79999999999998	0.567987617163065\\
9.89999999999998	0.55254300178868\\
9.99999999999998	0.536208058426657\\
};
\end{axis}

\begin{axis}[%
width=4in,
height=0.793in,
at={(0in,0in)},
scale only axis,
xmin=-0.5,
xmax=10.5,
xtick={0,1,2,3,4,5,6,7,8,9,10},
ylabel style={font=\color{white!15!black}},
xlabel = {time $t$},
ylabel={$\|u(t)\|_{L_2(\partial\Omega)}$},
axis background/.style={fill=white},
title={Norm of optimal control over time}
]
\addplot [color=black, line width=2.0pt, forget plot]
  table[row sep=crcr]{%
0	0\\
0.1	0.652900529530023\\
0.2	0.56963403315228\\
0.3	0.501758840997319\\
0.4	0.44696949905044\\
0.5	0.403165363665998\\
0.6	0.368441192160381\\
0.7	0.341112573348394\\
0.8	0.319726384250835\\
0.9	0.303056966782957\\
1	0.290092259160831\\
1.1	0.280012917440207\\
1.2	0.272167150679186\\
1.3	0.266044002997808\\
1.4	0.261247473204619\\
1.5	0.257473147561243\\
1.6	0.254488211718449\\
1.7	0.252115041083533\\
1.8	0.250218136021358\\
1.9	0.248693950948529\\
2	0.247463101699509\\
2.1	0.246464459670266\\
2.2	0.245650706674075\\
2.3	0.244985001777402\\
2.4	0.244438485108993\\
2.5	0.243988407110902\\
2.6	0.243616723242747\\
2.7	0.243309034490701\\
2.8	0.243053784849951\\
2.9	0.242841650112261\\
3	0.242665069511349\\
3.1	0.242517884491296\\
3.2	0.242395058206665\\
3.3	0.242292456214711\\
3.4	0.24220667384143\\
3.5	0.242134899386057\\
3.6	0.242074805034713\\
3.7	0.24202445934827\\
3.8	0.241982256664684\\
3.9	0.241946859852342\\
4	0.241917153669846\\
4.1	0.241892206603024\\
4.2	0.241871239515132\\
4.3	0.241853599800402\\
4.4	0.24183874000244\\
4.5	0.241826200068411\\
4.6	0.241815592572662\\
4.7	0.241806590370605\\
4.8	0.241798916243739\\
4.9	0.241792334175747\\
5	0.24178664196229\\
5.1	0.241781664907034\\
5.2	0.241777250396054\\
5.3	0.241773263174224\\
5.4	0.2417695811719\\
5.5	0.241766091749424\\
5.6	0.24176268824145\\
5.7	0.241759266693557\\
5.8	0.241755722690498\\
5.9	0.241751948178991\\
5.99999999999999	0.241747828188403\\
6.09999999999999	0.241743237349985\\
6.19999999999999	0.241738036109315\\
6.29999999999999	0.241732066516925\\
6.39999999999999	0.241725147468199\\
6.49999999999999	0.241717069244348\\
6.59999999999999	0.241707587180155\\
6.69999999999999	0.241696414248746\\
6.79999999999999	0.24168321230494\\
6.89999999999999	0.241667581661074\\
6.99999999999999	0.24164904857299\\
7.09999999999999	0.24162705007493\\
7.19999999999999	0.241600915397312\\
7.29999999999999	0.24156984289514\\
7.39999999999999	0.241532870950566\\
7.49999999999999	0.241488840601867\\
7.59999999999999	0.241436346552609\\
7.69999999999999	0.241373671507388\\
7.79999999999999	0.241298696114769\\
7.89999999999999	0.241208772621533\\
7.99999999999999	0.241100543782808\\
8.09999999999999	0.240969678251727\\
8.19999999999999	0.240810477412841\\
8.29999999999999	0.24061528298927\\
8.39999999999999	0.240373574329237\\
8.49999999999999	0.240070580554495\\
8.59999999999999	0.239685132424692\\
8.69999999999999	0.239186321183591\\
8.79999999999998	0.238528285104626\\
8.89999999999998	0.23764206111019\\
8.99999999999998	0.23642284837379\\
9.09999999999998	0.234710134087408\\
9.19999999999998	0.232256799142516\\
9.29999999999998	0.228681411800296\\
9.39999999999998	0.223395361127026\\
9.49999999999998	0.215493586726853\\
9.59999999999998	0.203596221652948\\
9.69999999999998	0.185636255935455\\
9.79999999999998	0.158640228654919\\
9.89999999999998	0.118817749080512\\
9.99999999999998	0.0635849818314392\\
};
\end{axis}

\end{tikzpicture}%
		\caption{Turnpike property for optimal state and control.}
		\label{fig:tp_state_control}
	\end{figure}
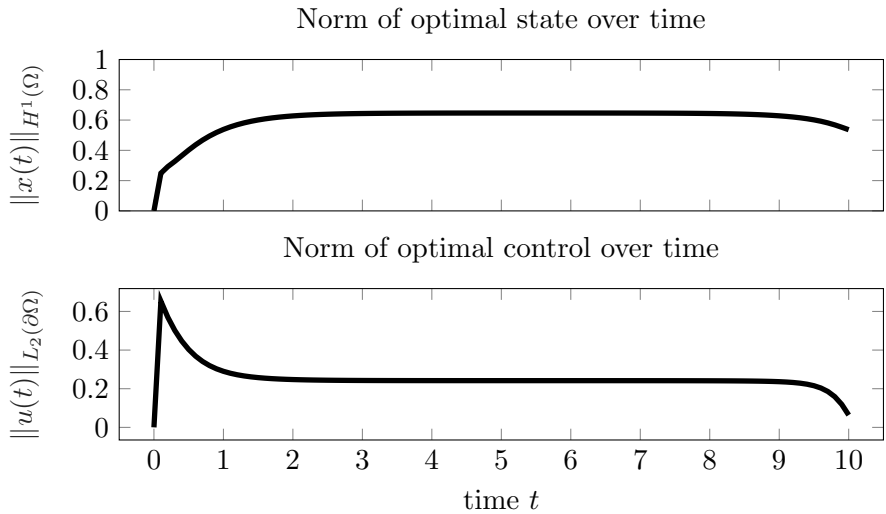
	\begin{figure}[H]
		\begin{minipage}[]{0.4\linewidth}
			\centering
			Dynamic control $u$ at $t=5$ (above) vs steady control $\bar{u}$ (below)
			\includegraphics[width=\linewidth]{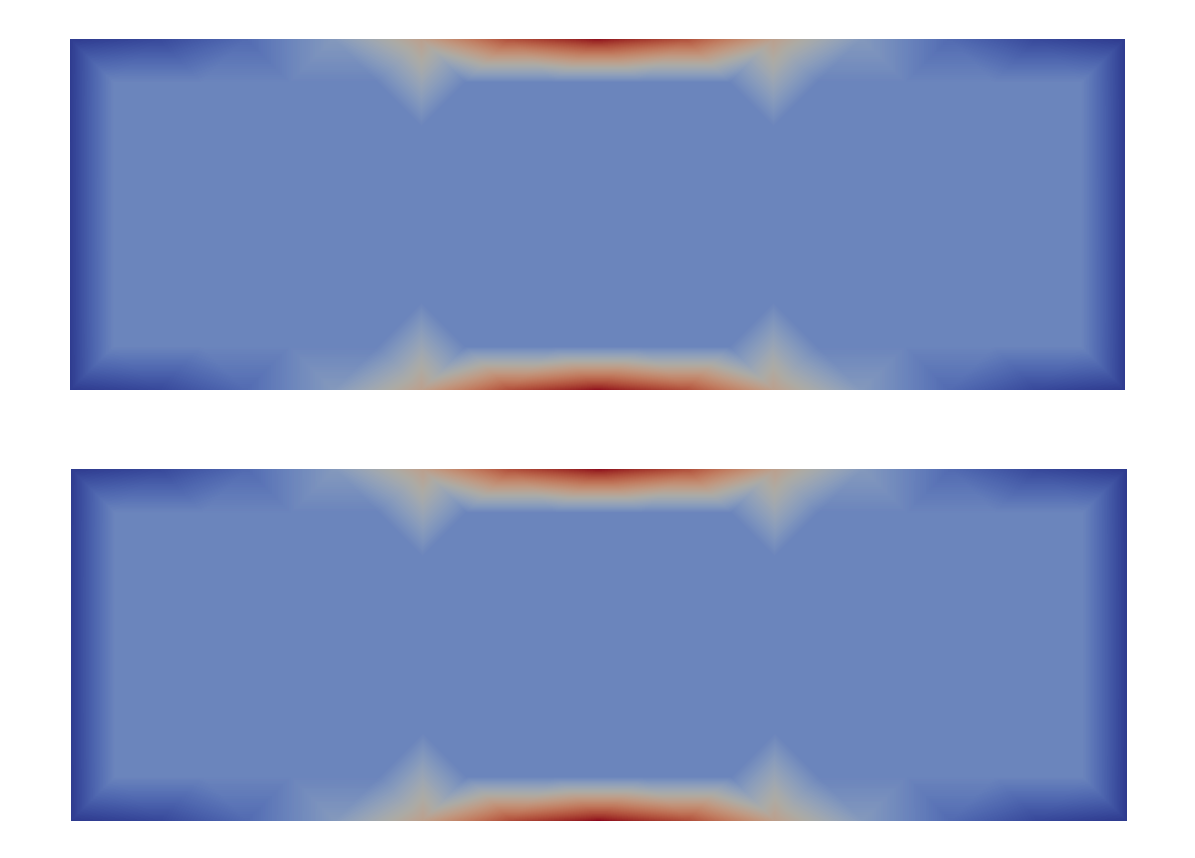}
		\end{minipage}
		\begin{minipage}[]{0.4\linewidth}
			\centering
			Dynamic state $x$ at $t=5$ (above) vs steady state $\bar{x}$ (below)
			\includegraphics[width=\linewidth]{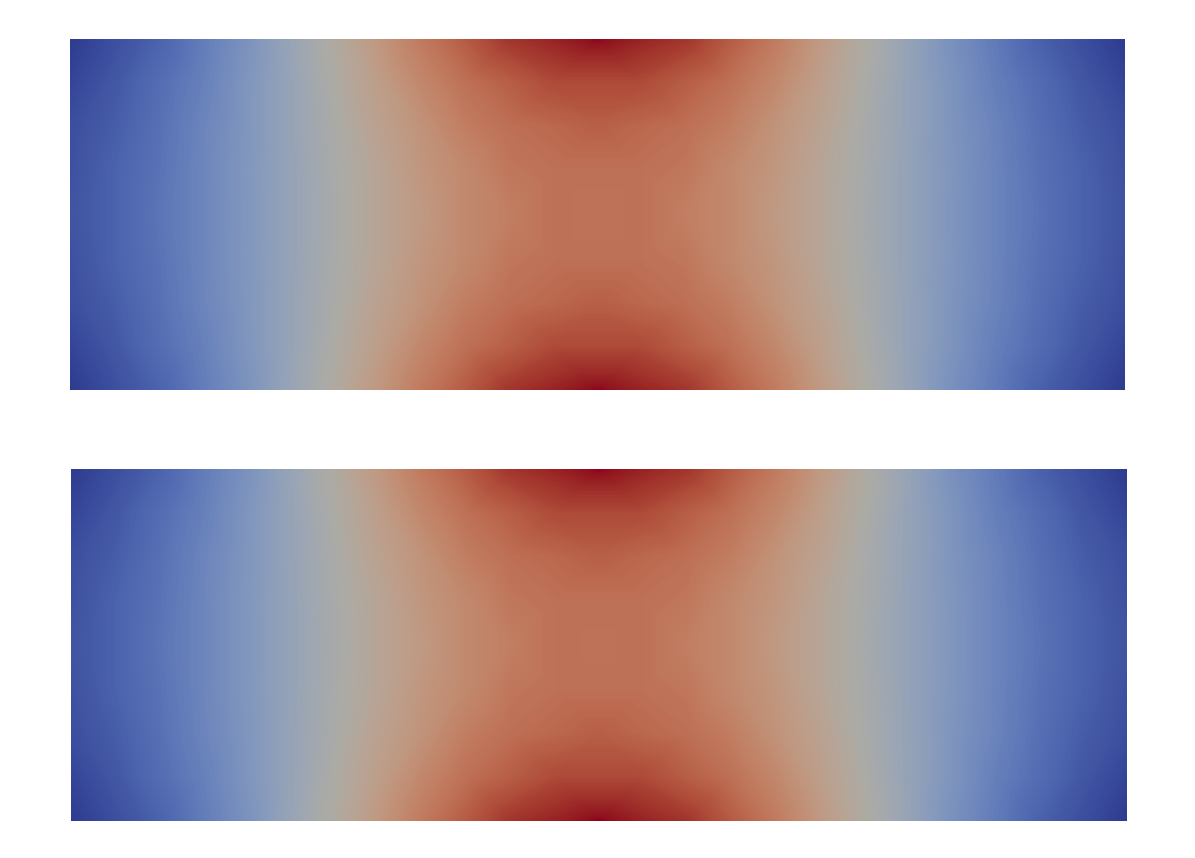}
		\end{minipage}
		\caption{Comparison of dynamic solution $(x,u)$ in the middle of the time interval with the steady state solution $(\bar{x},\bar{u})$ with the same coloring as used in \Cref{fig:tp_state_control}.}
		\label{fig:turnpike_state_control2}
	\end{figure}
	While the turnpike property of the state and adjoints is verified numerically, the remaining assumptions can be checked analytically as follows.
	By the classical embeddings $H^1(\Omega)\hookrightarrow L_p(\Omega)$ for any $1\leq p < \infty$ for $\Omega \subset \mathbb{R}^2$, cf.\ \cite[Sec.\ 7.1]{Troeltzsch2010} or \cite[Chap.\ V]{Adams1975} and as the nonlinearity is cubic, the occurring superposition operators satisfy \Cref{as:cont} for $Y=L_2(\Omega)$. Moreover, as $D(\mathcal{A}^*)=\{v\in H^2(\Omega)\,|\, \frac{\partial}{\partial \nu}v=0\}\hookrightarrow H^1(\Omega)=X$ compactly, the perturbation $\Phi(\bar{x})\in L(X,Y)$ is $\mathcal{A}^*$-compact and $A^*=\mathcal{A}^* +\Phi(\bar{x})^*$ generates an analytic semigroup on $L_2(\Omega)$, and $D(A^*)=D(\mathcal{A}^*)$, cf.\ \cite[Chap.\ III, Thm.\ 2.10]{Engel2000}. Thus, assuming additionally a boundedness condition of the adjoint, \Cref{thm:parstab} applies and we obtain the turnpike property for the adjoint in $Y=L_2(\Omega)$. Additionally by analyticity of the semigroup, the improved estimate of \Cref{thm:analytic} holds and we get the estimate also in the space $W^{A^*}(0,T)$ with $D(A^*)=\{v\in H^2(\Omega)\,|\, \frac{\partial}{\partial \nu}v=0\}$ and $(D(A^*),L_2(\Omega))_{\frac{1}{2}}\subset H^1(\Omega)$.
	In \Cref{fig:tp_adjoint} we observe the turnpike property for the adjoint in the $H^1(\Omega)$-norm and also in the $L_2(\Omega)$-norm.
	\begin{figure}[H]
		\centering
%
%
\definecolor{mycolor1}{rgb}{0.00000,0.44700,0.74100}%
\begin{tikzpicture}

\begin{axis}[%
width=4in,
height=0.793in,
at={(0in,1.2in)},
scale only axis,
xmin=-0.5,
xmax=10.5,
xtick={0,1,2,3,4,5,6,7,8,9,10},
xticklabels={\empty},
ylabel style={font=\color{white!15!black}},
ylabel={$\|\lambda(t)\|_{L_2(\Omega)}$},
axis background/.style={fill=white},
title={$L_2(\Omega)$-norm of adjoint over time}
]
\addplot [color=black, line width=2.0pt, forget plot]
  table[row sep=crcr]{%
0	0.482061869157957\\
0.1	0.482061869157956\\
0.2	0.430003913619551\\
0.3	0.387076954069737\\
0.4	0.352020391594401\\
0.5	0.323588640741803\\
0.6	0.300637858827959\\
0.7	0.282164622379158\\
0.8	0.267315254575738\\
0.9	0.255379314065899\\
1	0.24577527273099\\
1.1	0.238033051329498\\
1.2	0.231776150409559\\
1.3	0.226704935085748\\
1.4	0.22258186401405\\
1.5	0.21921894704107\\
1.6	0.216467400804886\\
1.7	0.214209297319241\\
1.8	0.212350924518459\\
1.9	0.210817563802286\\
2	0.209549410265436\\
2.1	0.208498397692837\\
2.2	0.207625730995909\\
2.3	0.206899967289856\\
2.4	0.206295520427532\\
2.5	0.205791491684239\\
2.6	0.205370751655987\\
2.7	0.2050192159848\\
2.8	0.204725271088948\\
2.9	0.204479316453889\\
3	0.204273397929452\\
3.1	0.204100912455871\\
3.2	0.203956369163976\\
3.3	0.203835195218824\\
3.4	0.203733577373821\\
3.5	0.203648332179659\\
3.6	0.203576799303856\\
3.7	0.203516753577909\\
3.8	0.20346633228623\\
3.9	0.20342397490824\\
4	0.203388373070316\\
4.1	0.203358428893271\\
4.2	0.203333220260729\\
4.3	0.203311971804145\\
4.4	0.203294030616687\\
4.5	0.203278845882287\\
4.6	0.203265951746743\\
4.7	0.203254952871655\\
4.8	0.203245512204445\\
4.9	0.20323734057286\\
5	0.203230187773351\\
5.1	0.20322383487209\\
5.2	0.203218087477095\\
5.3	0.203212769771439\\
5.4	0.20320771912214\\
5.5	0.203202781097847\\
5.6	0.203197804741579\\
5.7	0.203192637953083\\
5.8	0.203187122838977\\
5.9	0.203181090888089\\
5.99999999999999	0.203174357824046\\
6.09999999999999	0.203166717977175\\
6.19999999999999	0.203157938002652\\
6.29999999999999	0.203147749751008\\
6.39999999999999	0.203135842069657\\
6.49999999999999	0.203121851278787\\
6.59999999999999	0.203105350020005\\
6.69999999999999	0.203085834118985\\
6.79999999999999	0.203062707030656\\
6.89999999999999	0.203035261342024\\
6.99999999999999	0.20300265668639\\
7.09999999999999	0.202963893262742\\
7.19999999999999	0.202917779939766\\
7.29999999999999	0.202862895631447\\
7.39999999999999	0.20279754222497\\
7.49999999999999	0.202719686767228\\
7.59999999999999	0.20262688979076\\
7.69999999999999	0.202516215455958\\
7.79999999999999	0.202384117407976\\
7.89999999999999	0.202226291590957\\
7.99999999999999	0.202037483260664\\
8.09999999999999	0.201811229362924\\
8.19999999999999	0.201539508172462\\
8.29999999999999	0.201212253864235\\
8.39999999999999	0.200816671798905\\
8.49999999999999	0.200336256527088\\
8.59999999999999	0.19974936232106\\
8.69999999999999	0.19902709534714\\
8.79999999999998	0.198130171930856\\
8.89999999999998	0.197004195205687\\
8.99999999999998	0.195572507365882\\
9.09999999999998	0.193725324226859\\
9.19999999999998	0.191303175967103\\
9.29999999999998	0.188071650846934\\
9.39999999999998	0.18368289881566\\
9.49999999999998	0.177617007270466\\
9.59999999999998	0.169092517195575\\
9.69999999999998	0.15692754230257\\
9.79999999999998	0.139309357848771\\
9.89999999999998	0.113319529508732\\
9.99999999999998	0.0733092627947845\\
};
\end{axis}

\begin{axis}[%
width=4in,
height=0.793in,
at={(0in,0in)},
scale only axis,
xmin=-0.5,
xmax=10.5,
xtick={0,1,2,3,4,5,6,7,8,9,10},
ylabel style={font=\color{white!15!black}},
ylabel={$\|\lambda(t)\|_{H^1(\Omega)}$},
xlabel={time $t$},
axis background/.style={fill=white},
title={$H^1(\Omega)$-norm of adjoint over time}
]
\addplot [color=black, line width=2.0pt, forget plot]
  table[row sep=crcr]{%
0	0.718936220300763\\
0.1	0.718936220300759\\
0.2	0.681001290547671\\
0.3	0.650573804887459\\
0.4	0.62641815838298\\
0.5	0.607335328013053\\
0.6	0.59228202238474\\
0.7	0.580396665740955\\
0.8	0.570988722573438\\
0.9	0.563514782620392\\
1	0.557551860611943\\
1.1	0.552772597770235\\
1.2	0.548924152371452\\
1.3	0.545811100710761\\
1.4	0.543281999068559\\
1.5	0.54121901758937\\
1.6	0.539530031770056\\
1.7	0.538142624368975\\
1.8	0.536999546475465\\
1.9	0.536055281466381\\
2	0.535273437862482\\
2.1	0.534624763745037\\
2.2	0.534085627335982\\
2.3	0.533636847893025\\
2.4	0.533262790750714\\
2.5	0.532950662417814\\
2.6	0.532689957985867\\
2.7	0.53247202517581\\
2.8	0.532289718263312\\
2.9	0.532137121713531\\
3	0.532009328239785\\
3.1	0.531902259632437\\
3.2	0.531812521416467\\
3.3	0.53173728443167\\
3.4	0.531674187965472\\
3.5	0.531621260234571\\
3.6	0.531576852902603\\
3.7	0.531539587006425\\
3.8	0.53150830819433\\
3.9	0.531482049593346\\
4	0.531460000947634\\
4.1	0.531441482926702\\
4.2	0.531425925706223\\
4.3	0.531412851087568\\
4.4	0.531401857553414\\
4.5	0.531392607762957\\
4.6	0.531384818076423\\
4.7	0.531378249768823\\
4.8	0.531372701650329\\
4.9	0.531368003857759\\
5	0.531364012620367\\
5.1	0.531360605834917\\
5.2	0.531357679311198\\
5.3	0.531355143570602\\
5.4	0.531352921097893\\
5.5	0.531350943960528\\
5.6	0.531349151721244\\
5.7	0.531347489578457\\
5.8	0.5313459066756\\
5.9	0.531344354524904\\
5.99999999999999	0.531342785493348\\
6.09999999999999	0.531341151298313\\
6.19999999999999	0.531339401457554\\
6.29999999999999	0.531337481631659\\
6.39999999999999	0.531335331786102\\
6.49999999999999	0.531332884082579\\
6.59999999999999	0.531330060382546\\
6.69999999999999	0.531326769205874\\
6.79999999999999	0.531322901927353\\
6.89999999999999	0.531318327903968\\
6.99999999999999	0.531312888090912\\
7.09999999999999	0.531306386501479\\
7.19999999999999	0.531298578559932\\
7.29999999999999	0.53128915493364\\
7.39999999999999	0.531277718729322\\
7.49999999999999	0.531263752873209\\
7.59999999999999	0.531246572875705\\
7.69999999999999	0.53122525771677\\
7.79999999999999	0.531198547835186\\
7.89999999999999	0.53116469348671\\
7.99999999999999	0.531121228023179\\
8.09999999999999	0.531064627370171\\
8.19999999999999	0.530989796766498\\
8.29999999999999	0.530889295072394\\
8.39999999999999	0.53075216021939\\
8.49999999999999	0.530562128504664\\
8.59999999999999	0.530294933270179\\
8.69999999999999	0.529914207133377\\
8.79999999999998	0.529365270333824\\
8.89999999999998	0.528565728941235\\
8.99999999999998	0.527391279521269\\
9.09999999999998	0.525654353065473\\
9.19999999999998	0.523072141038719\\
9.29999999999998	0.519219003413304\\
9.39999999999998	0.513456004836167\\
9.49999999999998	0.504826460096589\\
9.59999999999998	0.491896611515997\\
9.69999999999998	0.47248200960828\\
9.79999999999998	0.442998945721979\\
9.89999999999998	0.395935092994185\\
9.99999999999998	0.304800339357979\\
};
\end{axis}

\end{tikzpicture}%
		\caption{Turnpike property for the adjoint.}
		\label{fig:tp_adjoint}
	\end{figure}
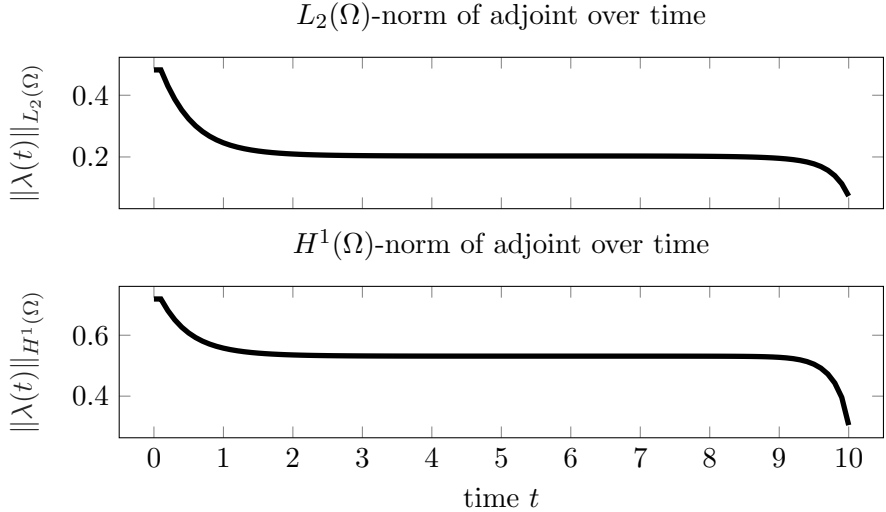
	\section{Summary and outlook}
	This paper presented an interval turnpike result for the adjoints of finite- and infinite-dimensional nonlinear optimal control problems
	under the assumption of an interval turnpike on states and controls.
	We analyzed the case of stabilizable dynamics 
	governed by a generator of a semigroup with finite dimensional unstable part satisfying a spectral decomposition
	condition. We have shown the desired turnpike property under continuity
	assumptions on the first-order optimality conditions. We illustrated our findings drawing upon a boundary controlled semilinear heat equation.
	
	We briefly discuss possible directions of further research. A central assumption in the results of this work is the boundedness condition on the adjoint, i.e., $\sup_{T\ge 0}\|\lambda-\bar{\lambda}\|_{C(t_2(T),T;X)} < \infty$. We have presented a very particular case in \Cref{ex:boundedness} where this bound holds, however, under the strict condition that the cost at the equilibrium has to vanish. It is desirable to prove the bound under milder conditions.
	
	 In \Cref{sec:example} we verified the turnpike property of state and control only numerically. This is due to the fact that the verification of assumptions of global turnpike theorems via dissipativity, cf.\ \Cref{rem:dissipativity}, is a highly non-trivial issue. In that context, suitable storage functions need to be constructed and we refer to \cite{Gruene2019a} for a promising approach in that direction.

 An inspection of the proofs of \Cref{sec:stableorcontrollable,sec:finitedim} shows that, under suitable assumptions, it should be possible replace the control operator $\mathcal{B}$ by an unbounded operator, which is, e.g., the case for boundary control. In that context, a concept like admissibility or assuming that solutions are classical solutions is a suitable replacement for boundedness of $\mathcal{B}$.

	\bibliographystyle{abbrv}
	\bibliography{literature}
	
\end{document}